\newcommand{\wuhao}{\fontsize{10.5pt}{\baselineskip}\selectfont}
\newtheorem{theorem}{Theorem}[section]
\newtheorem{proposition}[theorem]{Proposition}
\newtheorem{lemma}[theorem]{Lemma}
\newtheorem{corollary}[theorem]{Corollary}
\theoremstyle{definition}
\newtheorem{remark}[theorem]{Remark}
\numberwithin{equation}{section}
\begin{document}

\title [The LLBar equation]{\wuhao Global well-posedness for the Landau-Lifshitz-Baryakhtar equation in $\mathbb{R}^3$}

\author{Fan Xu}
\address{School of Mathematics and Statistics, Hubei Key Laboratory of Engineering Modeling  and Scientific Computing, Huazhong University of Science and Technology,  Wuhan 430074, Hubei, P.R. China.}
\email{d202280019@hust.edu.cn (F. Xu)}

\author{Bin Liu}
\address{School of Mathematics and Statistics, Hubei Key Laboratory of Engineering Modeling  and Scientific Computing, Huazhong University of Science and Technology,  Wuhan 430074, Hubei, P.R. China.}
\email{binliu@mail.hust.edu.cn (B. Liu)}

\keywords{Landau-Lifshitz-Baryakhtar equation; Well-posedness; Strong solution; Smooth solution.}

\begin{abstract}
This paper establishes the global well-posedness of the Landau-Lifshitz-Baryakhtar (LLBar) equation in the whole space $\mathbb{R}^3$. The study first demonstrates the existence and uniqueness of global strong solutions using the weak compactness approach. Furthermore, the existence and uniqueness of classical solutions, as well as arbitrary smooth solutions, are derived through a bootstrap argument. The proofs for the existence of these three types of global solutions are based on Friedrichs mollifier approximation and energy estimates, with the structure of the LLBar equation playing a crucial role in the derivation of the results.
\end{abstract}

\maketitle
\section{Introduction}\label{sec1}

\subsection{Statement of the problem}
The origins of ferromagnetism theory can be traced back to Weiss's seminal work in \cite{brown1963micromagnetics,weiss1907hypothese}, where he introduced the molecular field theory to explain the macroscopic magnetization of ferromagnetic materials. Subsequently, Landau and Lifshitz \cite{landau1935theory} formulated a dynamical theory for the magnetization of ferromagnetic materials, leading to the introduction of the celebrated Landau-Lifshitz (LL) equation. This equation describes the evolution of the spin magnetic moment in ferromagnetic systems, especially in the context of precession and nonlinear dynamics under external magnetic fields. Later, Gilbert \cite{gilbert1955lagrangian} made a significant contribution by extending the theory through the Landau-Lifshitz-Gilbert (LLG) equation, which incorporates dissipation effects through a damping term. The LLG equation provides a more detailed description of the evolution of the spin magnetic moment, particularly in the context of precession and energy dissipation under an external magnetic field. Moreover, Garanin \cite{garanin1997fokker} developed a thermodynamically consistent framework for spin dynamics and derived the Landau-Lifshitz-Bloch (LLB) equation. The LLB equation improves upon the LL and LLG models by offering a more accurate description of magnetic dynamics at high temperatures, especially near the Curie temperature, where thermal fluctuations become significant. Over the past several decades, the existence, uniqueness, and regularity of solutions to both the LLG and LLB equations have been the subject of extensive mathematical investigation, with notable references including \cite{alouges1992global,carbou2001regular1,carbou2001regular,feischl2017existence,gutierrez2019cauchy,he2023landau,le2016weak,li2021weak,li2021smooth,lin2015global,peng2022strong,pu2022global} and others cited therein. To address experimental observations and microscopic calculations, including the non-local damping effects in magnetic metals and crystals or the unexpectedly high attenuation of short-wavelength magnons, Baryakhtar \cite{baryakhtar1984phenomenological,baryakhtar2013phenomenological,baryakhtar1997soliton} extended both the LLG and LLB models. He introduced the Landau-Lifshitz-Baryakhtar (LLBar) equation, a fourth-order nonlinear parabolic equation, to better capture these phenomena. The LLBar equation incorporates higher-order terms that account for non-local damping effects and the attenuation of high-frequency excitations, providing a more comprehensive description of the magnetic behavior in materials with complex dynamical properties. The dynamic evolution of its solutions is described by
\begin{equation}\label{sys0}
\left\{
\begin{aligned}
&\mathbf{u}_t=\lambda_r\mathbf{H}_{\textrm{eff}}-\lambda_e\Delta\mathbf{H}_{\textrm{eff}}-\gamma\mathbf{u}\times \mathbf{H}_{\textrm{eff}},\\
&\mathbf{H}_{\textrm{eff}}=\Delta \mathbf{u}+\frac{1}{2\chi}(1-|\mathbf{u}|^2)\mathbf{u},
\end{aligned}
\right.
\end{equation}
where the unknown quantity $\mathbf{u}(t,x)\in\mathbb{R}^3$ denotes the magnetization vector. The positive constants $\lambda_r$, $\lambda_e$, and $\gamma$ are the relativistic damping constant, the exchange damping constant, and the electron gyromagnetic ratio, respectively. The positive constant $\chi$ is the magnetic susceptibility of the material. Without loss of generality, we assume that $\chi=\frac{1}{4}$, $\lambda_r=\lambda_e=\gamma=1$ in this paper. The notation $\mathbf{H}_{\textrm{eff}}$ denotes the effective field, which consists of the external magnetic field, the demagnetizing field and some quantum mechanical effects, etc.  For more details on the background of the LLG,  LLB and LLBar equations, we refer to \cite{atxitia2016fundamentals,dvornik2013micromagnetic,dvornik2014thermodynamically,guo2008landau,mayergoyz2009nonlinear,soenjaya2023global,vogler2014landau} and the references therein.

The mathematical analysis of the LLBar equation has recently been undertaken by Soenjaya and Tran \cite{soenjaya2023global}, who proved the existence and uniqueness of global weak (strong) solutions in bounded domains with smooth boundaries using the classical Galerkin finite-dimensional approximation method. Building upon the analysis in bounded domains, we now extend the problem to the three-dimensional whole space, transitioning to the Cauchy problem. More specifically, in this paper, we consider the Cauchy problem for the following LLBar equation:
\begin{equation}\label{sys1}
\left\{
\begin{aligned}
&\mathbf{u}_t=-\Delta^2\mathbf{u}-\Delta\mathbf{u}+2(1-|\mathbf{u}|^2)\mathbf{u}+2\Delta(|\mathbf{u}|^2\mathbf{u})-\mathbf{u}\times\Delta \mathbf{u},&\textrm{in}~\mathbb{R}_+\times\mathbb{R}^3,\\
&\mathbf{u}(0)=\mathbf{u}_0,&\textrm{in}~\mathbb{R}^3.
\end{aligned}
\right.
\end{equation}
We shall establish the existence and uniqueness of a global strong solution to system \eqref{sys1}, along with the existence of classical and arbitrarily smooth solutions under appropriate regularity conditions on the initial data. The proofs of these results are framed within a unified approach, which combines Friedrichs mollifier approximation with energy estimates.

\subsection{Main results}
In this paper, we adopt the following conventions: The inequality $A \lesssim_{a,b,\cdots} B$ means that there exists a positive constant $C$ depending only on $a, b, \cdots$ such that $A \leq C B$, while $A \asymp_{a,b,\cdots} B$ indicates that there exist two positive constants $C \leq C'$ depending only on $a, b, \cdots$ such that $C B \leq A \leq C' B$.

Let $\mathbb{L}^p := L^p(\mathbb{R}^3; \mathbb{R}^3)$ be the space of $\mathbb{R}^3$-valued functions that are $p$-th power Lebesgue integrable. Given a Banach space $X$, the symbol $X' := \mathcal{L}(X; \mathbb{R})$ denotes its dual space. Let $\mathcal{C}_0^{\infty}(\mathbb{R}^3)$ denote the space of all $\mathbb{R}$-valued functions of class $\mathcal{C}^{\infty}$ with compact support. Let $\Lambda^s := (I - \Delta)^{\frac{s}{2}}$ denote the Bessel potential operator~\cite{bahouri2011fourier}. Define the Sobolev space $H^s(\mathbb{R}^3)$ as the Hilbert space endowed with the norm
\begin{equation*}
\|f\|_{H^s} = \|\Lambda^s f\|_{L^2} = \left[ \int_{\mathbb{R}^3} (1 + |\xi|^2)^s |\hat{f}(\xi)|^2\, \mathrm{d}\xi \right]^{\frac{1}{2}},
\end{equation*}
where $\hat{f}$ denotes the Fourier transform of a tempered distribution $f$. Let $\mathbb{H}^s := H^s(\mathbb{R}^3; \mathbb{R}^3)$.

The main results can now be stated as follows.

\begin{theorem}\label{the1}
Fix arbitrary $T>0$.
\begin{itemize}
\item[\textbf{(1)}] If $\mathbf{u}_0 \in \mathbb{H}^2$, then system \eqref{sys1} admits a unique global strong solution
$\mathbf{u} \in \mathcal{C}([0,T]; \mathbb{H}^2) \cap L^2(0,T; \mathbb{H}^4)$,
such that for any $t \in [0,T]$ and $\phi \in \mathbb{L}^2$,
\begin{equation}\label{equ2}
\begin{split}
(\mathbf{u}(t), \phi)_{\mathbb{L}^2} &= (\mathbf{u}_0, \phi)_{\mathbb{L}^2}
- \int_0^t (\Delta^2 \mathbf{u}, \phi)_{\mathbb{L}^2} \,\mathrm{d}s
- \int_0^t (\Delta \mathbf{u}, \phi)_{\mathbb{L}^2} \,\mathrm{d}s \\
&\quad + 2 \int_0^t ((1 - |\mathbf{u}|^2)\mathbf{u}, \phi)_{\mathbb{L}^2} \,\mathrm{d}s
+ 2 \int_0^t (\Delta (|\mathbf{u}|^2 \mathbf{u}), \phi)_{\mathbb{L}^2} \,\mathrm{d}s- \int_0^t (\mathbf{u} \times \Delta \mathbf{u}, \phi)_{\mathbb{L}^2} \,\mathrm{d}s.
\end{split}
\end{equation}

\item[\textbf{(2)}] If $\mathbf{u}_0 \in \mathbb{H}^6$, then system \eqref{sys1} admits a unique global classical solution
$\mathbf{u} \in \mathcal{C}([0,T]; \mathcal{C}^4(\mathbb{R}^3)) \cap \mathcal{C}^1([0,T]; \mathcal{C}(\mathbb{R}^3))$.

\item[\textbf{(3)}] If $\mathbf{u}_0 \in \mathbb{H}^{6 + 4m}$ for some $m \in \mathbb{N}^+$, then system \eqref{sys1} admits a unique global smooth solution
$\mathbf{u} \in \bigcap_{k=0}^{m+1} \mathcal{C}^k([0,T]; \mathcal{C}^{4(m+1 - k)}(\mathbb{R}^3))$.
\end{itemize}
\end{theorem}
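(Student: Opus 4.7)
The plan is to prove all three parts within a unified scheme based on Friedrichs mollifier approximation, uniform energy estimates, and a bootstrap argument. The approximate problem replaces \eqref{sys1} by a Banach-space ODE of the form $\mathbf{u}^\varepsilon_t = J_\varepsilon F(J_\varepsilon \mathbf{u}^\varepsilon)$, where $F$ is the right-hand side of the LLBar equation and $J_\varepsilon$ is a Friedrichs mollifier; because each differential operator becomes bounded on $\mathbb{L}^2$ after convolution with a smooth bump, standard Picard-Lindel\"of theory produces local solutions $\mathbf{u}^\varepsilon\in \mathcal{C}^1([0,T_\varepsilon);\mathbb{L}^2)$.

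The heart of part (1) is a uniform-in-$\varepsilon$ $\mathbb{H}^2$ a priori estimate. The $\mathbb{L}^2$ and $\mathbb{H}^1$ estimates are essentially free: the pointwise identities $(\mathbf{u}\times\Delta\mathbf{u})\cdot\mathbf{u}=(\mathbf{u}\times\Delta\mathbf{u})\cdot\Delta\mathbf{u}=0$ eliminate the precession term, and the biharmonic dissipation dominates the cubic contributions via Young's inequality together with the 3D embedding $\mathbb{H}^2\hookrightarrow\mathbb{L}^\infty$. For the $\mathbb{H}^2$ estimate, obtained by pairing the equation with $\Delta^2\mathbf{u}^\varepsilon$, integration by parts reduces the gyromagnetic pairing to $(\mathbf{u}\times\Delta\mathbf{u},\Delta^2\mathbf{u})_{\mathbb{L}^2}=-(\nabla\mathbf{u}\times\Delta\mathbf{u},\nabla\Delta\mathbf{u})_{\mathbb{L}^2}$, and all quasilinear and cubic contributions get absorbed into the dissipation $\|\Delta^2\mathbf{u}\|_{\mathbb{L}^2}^2+\|\nabla\Delta\mathbf{u}\|_{\mathbb{L}^2}^2$ by Gagliardo-Nirenberg interpolation. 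This yields uniform bounds $\mathbf{u}^\varepsilon\in L^\infty(0,T;\mathbb{H}^2)\cap L^2(0,T;\mathbb{H}^4)$; Aubin-Lions compactness (combined with a spatial diagonal argument to cover $\mathbb{R}^3$) extracts a limit satisfying \eqref{equ2}, continuity into $\mathbb{H}^2$ follows from the Lions-Magenes interpolation lemma applied with $\mathbf{u}_t\in L^2(0,T;\mathbb{L}^2)$, and uniqueness comes from an $\mathbb{L}^2$ estimate on the difference $\mathbf{w}=\mathbf{u}_1-\mathbf{u}_2$ that exploits $\|\mathbf{u}_i\|_{\mathbb{L}^\infty}\lesssim\|\mathbf{u}_i\|_{\mathbb{H}^2}$ and closes via Gronwall.

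For parts (2) and (3), the plan is to bootstrap. Starting from the $\mathbb{H}^2$ solution, I would test the equation against $\Delta^{2k}\mathbf{u}$ for $k=2,3,\ldots$ to propagate higher regularity: at each step, Leibniz expansion of the nonlinearities produces terms either controlled by the previous level's $L^\infty_t\mathbb{H}^{2(k-1)}\cap L^2_t\mathbb{H}^{2k}$ bound, or absorbed by the new top-order dissipation $\|\Delta^{k+1}\mathbf{u}\|_{\mathbb{L}^2}^2$. Iterating gives $\mathbf{u}\in L^\infty(0,T;\mathbb{H}^{6+4m})\cap L^2(0,T;\mathbb{H}^{8+4m})$ when $\mathbf{u}_0\in\mathbb{H}^{6+4m}$. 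The Sobolev embedding $\mathbb{H}^s(\mathbb{R}^3)\hookrightarrow \mathcal{C}^\ell(\mathbb{R}^3)$ for $s>\ell+3/2$ then converts the spatial regularity of (2) into $\mathbf{u}\in\mathcal{C}([0,T];\mathcal{C}^4(\mathbb{R}^3))$, and time derivatives are recovered by solving the equation for $\mathbf{u}_t$ and differentiating in $t$ repeatedly, trading four spatial derivatives for each additional time derivative to produce the mixed space-time regularity stated in (3).

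The principal obstacle is the quasilinear term $2\Delta(|\mathbf{u}|^2\mathbf{u})$ in the $\mathbb{H}^2$ estimate: when paired with $\Delta^2\mathbf{u}$ it produces a borderline top-order contribution against the biharmonic dissipation, since expanding the Laplacian schematically yields $|\mathbf{u}|^2\Delta\mathbf{u}$, $\mathbf{u}\,|\nabla\mathbf{u}|^2$ and similar products that sit exactly at the regularity scale of the energy. Taming it requires distributing one derivative back onto $\Delta\mathbf{u}$ by integration by parts and then interpolating $\|\nabla\mathbf{u}\|_{\mathbb{L}^\infty}$, $\|\Delta\mathbf{u}\|_{\mathbb{L}^4}$, and $\|\mathbf{u}\|_{\mathbb{L}^\infty}$ against $\|\Delta^2\mathbf{u}\|_{\mathbb{L}^2}$ via Gagliardo-Nirenberg so that the coefficient in front of the dissipation remains strictly less than one, enabling absorption. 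The same structural trick must be propagated through every bootstrap step in (2) and (3), but once the $\mathbb{H}^2$ estimate is secured the higher iterates follow a recurrent pattern driven by the same cancellation and absorption mechanism.
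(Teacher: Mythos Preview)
Your scheme delivers only a \emph{local} solution, not a global one. The claim that the $\mathbb{H}^1$ estimate is ``essentially free'' is where the plan breaks: when you test with $-\Delta\mathbf{u}^\varepsilon$, the quasilinear term $2\Delta(|\mathbf{u}|^2\mathbf{u})$ produces, after extracting the good-sign pieces $\||\mathbf{u}||\Delta\mathbf{u}|\|_{\mathbb{L}^2}^2$ and $\|\mathbf{u}\cdot\Delta\mathbf{u}\|_{\mathbb{L}^2}^2$, a residual of the type $\||\mathbf{u}||\Delta\mathbf{u}|\|_{\mathbb{L}^2}\|\nabla\mathbf{u}\|_{\mathbb{L}^4}^2$. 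In three dimensions the sharp interpolation $\|\nabla\mathbf{u}\|_{\mathbb{L}^4}^4\lesssim\|\nabla\Delta\mathbf{u}\|_{\mathbb{L}^2}^{3/2}\|\nabla\mathbf{u}\|_{\mathbb{L}^2}^{5/2}$ forces, after Young's inequality, a superlinear right-hand side of order $\|\nabla\mathbf{u}\|_{\mathbb{L}^2}^{10}$. This is a Bihari inequality, not a Gronwall one, and yields only a local time $T^*\asymp(\|\nabla\mathbf{u}_0\|_{\mathbb{L}^2}^2+C)^{-4}$. The embedding $\mathbb{H}^2\hookrightarrow\mathbb{L}^\infty$ you invoke cannot rescue this: you are trying to close at the $\mathbb{H}^1$ level and do not yet have $\mathbb{H}^2$ control, and attempting the $\mathbb{H}^2$ estimate directly runs into the same superlinearity in its Gronwall coefficient. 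Consequently your uniform bounds, Aubin--Lions passage, and bootstrap all hold only on $[0,T']$ with $T'<T^*$, giving a local strong solution and the blow-up criterion $\limsup_{t\nearrow T_M}\|\nabla\mathbf{u}(t)\|_{\mathbb{L}^2}=\infty$.

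The missing idea is the Lyapunov structure hidden in the effective-field formulation $\mathbf{u}_t=\mathbf{H}_{\mathrm{eff}}-\Delta\mathbf{H}_{\mathrm{eff}}-\mathbf{u}\times\mathbf{H}_{\mathrm{eff}}$ with $\mathbf{H}_{\mathrm{eff}}=\Delta\mathbf{u}+2(1-|\mathbf{u}|^2)\mathbf{u}$. Testing the equation against $-\mathbf{H}_{\mathrm{eff}}=2|\mathbf{u}|^2\mathbf{u}-\Delta\mathbf{u}-2\mathbf{u}$ (equivalently, summing the pairings with $2|\mathbf{u}|^2\mathbf{u}$, $-\Delta\mathbf{u}$, and $-2\mathbf{u}$) gives the exact identity
\[
\frac{\mathrm{d}}{\mathrm{d}t}\Bigl(\tfrac12\|\mathbf{u}\|_{\mathbb{L}^4}^4+\tfrac12\|\nabla\mathbf{u}\|_{\mathbb{L}^2}^2-\|\mathbf{u}\|_{\mathbb{L}^2}^2\Bigr)=-\|\mathbf{H}_{\mathrm{eff}}\|_{\mathbb{H}^1}^2\le 0,
\]
which, combined with the global $\mathbb{L}^2$ bound, yields $\sup_{t\in[0,T]}\|\nabla\mathbf{u}(t)\|_{\mathbb{L}^2}<\infty$ for every $T>0$. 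Feeding this into the blow-up criterion extends the local solution globally; the $\mathbb{H}^2$ and higher-order Gronwall inequalities then close on $[0,T]$ because their coefficients become integrable. Without this energy identity your argument stops at local existence and none of parts (1)--(3) can be reached for arbitrary $T$.
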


\begin{remark}
To the best of our knowledge, Theorem~\ref{the1} provides the first mathematical result on the Cauchy problem for the LLBar equation. In particular, conclusions \textbf{(2)} and \textbf{(3)} of Theorem~\ref{the1} establish the global existence and uniqueness of classical and arbitrarily smooth solutions under large initial data, which has not been addressed in previous work on initial-boundary value problem~\cite{soenjaya2023global}. The methods used in this paper differ slightly from those in the existing literature on the Landau-Lifshitz equations.
\end{remark}

\subsection{Organization}
This paper is organized as follows. In Section~\ref{sec2}, we establish the existence and uniqueness of a global smooth solution to the approximated system~\eqref{Mod-1}. In Section~\ref{sec3}, we derive uniform bounded estimates (independent of $\epsilon$) for the solutions $\mathbf{u}^{\epsilon}$ of the approximated system~\eqref{Mod-1}. In Section~\ref{sec4}, we analyze the convergence of the approximate solutions $\mathbf{u}^{\epsilon}$ to obtain local strong and local smooth solutions for the original system~\eqref{sys1}. Finally, in Section~\ref{sec5}, we leverage the intrinsic properties of the equation to extend the local solutions to global ones on the time interval $[0,T]$, and prove the uniqueness of the global solution, thus completing the proof of Theorem~\ref{the1}.

\section{Global solvability of the modified system}\label{sec2}
Let us first define the Friedrichs mollifier. Given any radial function
\begin{equation*}
\begin{split}
\rho(|x|)\in\mathcal{C}_0^{\infty}(\mathbb{R}^3),~\rho\geq0,~\int_{\mathbb{R}^3}\rho\,\mathrm{d}x=1,
\end{split}
\end{equation*}
define the mollification $J_{\epsilon}f$ of functions $f\in L^p(\mathbb{R}^3),~p\in[1,\infty]$, by
\begin{equation*}
\begin{split}
(J_{\epsilon}f)(x):=(\rho*f)(x):=\epsilon^{-3}\int_{\mathbb{R}^3}\rho(\frac{x-y}{\epsilon})f(y)\,\mathrm{d}y,~\epsilon>0.
\end{split}
\end{equation*}
The regularizing operator $J_{\epsilon}$ has the following well-known properties (cf. \cite{taylor1996partial}).
\begin{lemma}\label{lem0} Let $J_{\epsilon}$ be the mollifier as defined above.
\begin{itemize}
\item [{(\textbf{i})}] Mollifier $J_{\epsilon}$ commutes with distribution derivatives.
\item [(\textbf{ii})] For all $f\in \mathcal{C}(\mathbb{R}^3)$, $J_{\epsilon}f\rightarrow f$ uniformly on any compact set $\mathcal{O}\subset \mathbb{R}^3$ and
\begin{equation*}
\begin{split}
\|J_{\epsilon}f\|_{L^{\infty}}\leq \|f\|_{L^{\infty}}.
\end{split}
\end{equation*}
\item [{(\textbf{iii})}] For all $f,~g\in L^2(\mathbb{R}^3)$,
\begin{equation*}
\begin{split}
(J_{\epsilon}f,g)_{L^2}=(f,J_{\epsilon}g)_{L^2}.
\end{split}
\end{equation*}
\item [{(\textbf{iv})}] For all $f\in H^s(\mathbb{R}^3)$,~$s\geq0$, and $g\in H^l(\mathbb{R}^3)$,~$l\geq1$,
\begin{equation*}
\begin{split}
\lim_{\epsilon\searrow0}\|J_{\epsilon}f-f\|_{H^s}=0,~\|J_{\epsilon}g-g\|_{H^{l-1}}\leq C\epsilon\|g\|_{H^l}.
\end{split}
\end{equation*}
\item [{(\textbf{v})}] For all $f\in H^m(\mathbb{R}^3)$,~$k\in \mathbb{Z}^{+}\cup\{0\}$,~and $\epsilon>0$,
\begin{equation*}
\begin{split}
\|J_{\epsilon}f\|_{H^{m+k}}\leq C_{m,k}\epsilon^{-k}\|f\|_{H^m},~\|J_{\epsilon}D^kf\|_{L^{\infty}}\leq C_{k}\epsilon^{-\frac{3}{2}-k}\|f\|_{L^2}.
\end{split}
\end{equation*}
\end{itemize}
\end{lemma}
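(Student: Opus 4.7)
The plan is to handle parts (i)--(iii) by direct manipulation of the convolution definition, and parts (iv)--(v) by passing to the Fourier side via the multiplier identity $\widehat{J_\epsilon f}(\xi) = \hat\rho(\epsilon\xi)\hat f(\xi)$. The key structural observations are that $\rho\geq 0$ is radial, compactly supported and smooth with $\int\rho\,\mathrm{d}x=1$, hence $\hat\rho\in\mathcal{S}(\mathbb{R}^3)$ is real-valued and even with $\hat\rho(0)=1$ and $\|\hat\rho\|_{L^\infty}\leq 1$.

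Part (i) is the distributional identity $\partial^\alpha(\rho_\epsilon*f)=\rho_\epsilon*\partial^\alpha f$, immediate from $\rho_\epsilon\in\mathcal{C}_0^\infty(\mathbb{R}^3)$. For (ii), the $L^\infty$ bound reads $|J_\epsilon f(x)|\leq \int\rho_\epsilon(x-y)|f(y)|\,\mathrm{d}y\leq \|f\|_{L^\infty}$ using $\rho\geq 0$ and unit mass, while uniform convergence on a compact set $\mathcal{O}$ follows from
\[
J_\epsilon f(x)-f(x)=\int \rho_\epsilon(x-y)\bigl(f(y)-f(x)\bigr)\,\mathrm{d}y
\]
together with the uniform continuity of $f$ on an $\epsilon_0$-enlargement of $\mathcal{O}$. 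Part (iii) reduces via Fubini to the pointwise identity $\rho_\epsilon(x-y)=\rho_\epsilon(y-x)$, which holds because $\rho$ is radial, hence even.

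For (iv), I pass to Fourier side and write
\[
\|J_\epsilon f-f\|_{H^s}^2=\int_{\mathbb{R}^3}(1+|\xi|^2)^s|\hat\rho(\epsilon\xi)-1|^2|\hat f(\xi)|^2\,\mathrm{d}\xi.
\]
Vanishing as $\epsilon\to 0$ follows from pointwise convergence $\hat\rho(\epsilon\xi)\to 1$ and the dominating factor $|\hat\rho(\epsilon\xi)-1|^2\leq 4$ via Lebesgue's dominated convergence theorem. The quantitative estimate uses the first-order Taylor bound $|\hat\rho(\epsilon\xi)-1|\leq \epsilon|\xi|\,\|\nabla\hat\rho\|_{L^\infty}$, which absorbs one order of differentiability of $g$ into the factor $\epsilon$.

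For (v), the Sobolev bound again uses the Fourier representation together with the Schwartz decay $(1+|\eta|^2)^{k/2}|\hat\rho(\eta)|\leq C_k$. Since $1+|\xi|^2\leq \epsilon^{-2}(1+\epsilon^2|\xi|^2)$ for $\epsilon\leq 1$, we obtain $(1+|\xi|^2)^k|\hat\rho(\epsilon\xi)|^2\leq C_k\epsilon^{-2k}$, whence the claim follows. For the $L^\infty$-type estimate, I combine Cauchy--Schwarz in the form $\|(D^k\rho_\epsilon)*f\|_{L^\infty}\leq \|D^k\rho_\epsilon\|_{L^2}\|f\|_{L^2}$ with the scaling identity $D^k\rho_\epsilon(x)=\epsilon^{-3-k}(D^k\rho)(x/\epsilon)$, which gives $\|D^k\rho_\epsilon\|_{L^2}=\epsilon^{-3/2-k}\|D^k\rho\|_{L^2}$. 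Since each of the five items is classical (and explicitly referenced to Taylor's book), I do not expect a substantive obstacle; the only mildly delicate point is matching the $\epsilon$-powers in (v), which reduces to the elementary inequality displayed above.
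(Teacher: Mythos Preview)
Your proof is correct and follows the standard textbook route. Note that the paper does not actually prove this lemma: it is stated as a list of well-known properties with a citation to Taylor's \emph{Partial Differential Equations}, so there is no in-paper argument to compare against. Your Fourier-side treatment of (iv)--(v) and the direct convolution arguments for (i)--(iii) are exactly the expected proofs; the only minor caveat is that your inequality $1+|\xi|^2\leq \epsilon^{-2}(1+\epsilon^2|\xi|^2)$ in part~(v) assumes $\epsilon\leq 1$, but this is harmless since the paper works throughout with $\epsilon\in(0,1)$.
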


We now consider the following modified system:
\begin{equation}\label{Mod-1}
\left\{
\begin{aligned}
&\mathbf{u}_t^{\epsilon}=-J_{\epsilon}[\Delta^2(J_{\epsilon}\mathbf{u}^{\epsilon})]-J_{\epsilon}[\Delta (J_{\epsilon}\mathbf{u}^{\epsilon})]+2J_{\epsilon}[(1-|J_{\epsilon}\mathbf{u}^{\epsilon}|^2)J_{\epsilon}\mathbf{u}^{\epsilon}]+2J_{\epsilon}[\Delta(|J_{\epsilon}\mathbf{u}^{\epsilon}|^2J_{\epsilon}\mathbf{u}^{\epsilon})]\\
&-J_{\epsilon}[J_{\epsilon}\mathbf{u}^{\epsilon}\times \Delta (J_{\epsilon}\mathbf{u}^{\epsilon})],\\
&\mathbf{u}_0^{\epsilon}=J_{\epsilon}\mathbf{u}_0.
\end{aligned}
\right.
\end{equation}
System \eqref{Mod-1} can be expressed as an ODE in the work space $\mathbb{H}^s$:
\begin{equation}\label{Mod-2}
\left\{
\begin{aligned}
&\frac{\mathrm{d}}{\mathrm{d}t}  \mathbf{u}^{\epsilon}=F^{\epsilon}(\mathbf{u}^{\epsilon}),\\
&\mathbf{u}_0^{\epsilon}=J_{\epsilon}\mathbf{u}_0,
\end{aligned}
\right.
\end{equation}
where
\begin{equation}\label{def1}
\begin{split}
&F^{\epsilon}(\mathbf{u}^{\epsilon}):=-J_{\epsilon}[\Delta^2(J_{\epsilon}\mathbf{u}^{\epsilon})]-J_{\epsilon}[\Delta (J_{\epsilon}\mathbf{u}^{\epsilon})]+2J_{\epsilon}[(1-|J_{\epsilon}\mathbf{u}^{\epsilon}|^2)J_{\epsilon}\mathbf{u}^{\epsilon}]+2J_{\epsilon}[\Delta(|J_{\epsilon}\mathbf{u}^{\epsilon}|^2J_{\epsilon}\mathbf{u}^{\epsilon})]\\
&-J_{\epsilon}[J_{\epsilon}\mathbf{u}^{\epsilon}\times \Delta (J_{\epsilon}\mathbf{u}^{\epsilon})].
\end{split}
\end{equation}
The following lemma demonstrates the existence and uniqueness of a global smooth solution for the regularized system \eqref{Mod-2}.

\begin{lemma}\label{lem1} Let integer $s\geq10$, $1>\epsilon>0$, then system \eqref{Mod-2} admits a unique global solution $\mathbf{u}\in \mathcal {C}^1([0,T];\mathbb{H}^s)$.
\end{lemma}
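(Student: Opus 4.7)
The plan is to reformulate \eqref{Mod-2} as an abstract ODE in the Hilbert space $\mathbb{H}^s$, produce a unique local solution via the Cauchy-Lipschitz (Picard) theorem in a Banach space, and then extend it to all of $[0,T]$ by means of a uniform a priori bound. The first step is to check that $F^\epsilon:\mathbb{H}^s\to\mathbb{H}^s$ is locally Lipschitz for each fixed $\epsilon\in(0,1)$. For the linear parts $J_\epsilon\Delta^2 J_\epsilon\mathbf{u}$ and $J_\epsilon\Delta J_\epsilon\mathbf{u}$, Lemma~\ref{lem0}\,(v) combined with the contractivity of $J_\epsilon$ on $H^s$ gives $\|J_\epsilon\Delta^2 J_\epsilon\mathbf{u}\|_{H^s}\leq C\epsilon^{-4}\|\mathbf{u}\|_{H^s}$; for the cubic and cross-product terms, the algebra structure of $H^s$ (valid since $s\geq 10>3/2$) combined with the same smoothing estimate supplies the desired Lipschitz bound on $\mathbb{H}^s$-balls, with constants depending on $\epsilon$. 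The Cauchy-Lipschitz theorem then yields a unique local solution $\mathbf{u}^\epsilon\in\mathcal{C}^1([0,T^\ast_\epsilon);\mathbb{H}^s)$ on a maximal interval.

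To show $T^\ast_\epsilon>T$ for any prescribed $T>0$, I would next derive a uniform $L^2$ energy estimate. Writing $\mathbf{v}^\epsilon:=J_\epsilon\mathbf{u}^\epsilon$ and testing \eqref{Mod-1} against $\mathbf{u}^\epsilon$ in $L^2$, the self-adjointness of $J_\epsilon$ (Lemma~\ref{lem0}\,(iii)) rewrites each inner product purely in terms of $\mathbf{v}^\epsilon$. Two structural cancellations then become visible: the gyromagnetic cross-product contribution vanishes pointwise because $(\mathbf{v}^\epsilon\times\Delta\mathbf{v}^\epsilon)\cdot\mathbf{v}^\epsilon\equiv 0$, and after integration by parts the cubic $\Delta$-term produces $-\tfrac{1}{2}\int|\nabla|\mathbf{v}^\epsilon|^2|^2\,dx-2\int|\mathbf{v}^\epsilon|^2|\nabla\mathbf{v}^\epsilon|^2\,dx\leq 0$, which may be discarded on the dissipative side. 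After absorbing $\|\nabla\mathbf{v}^\epsilon\|_{L^2}^2$ into $\|\Delta\mathbf{v}^\epsilon\|_{L^2}^2$ via Young's inequality, Gronwall delivers $\sup_{t\in[0,T]\cap[0,T^\ast_\epsilon)}\|\mathbf{u}^\epsilon(t)\|_{L^2}\leq C(T,\|\mathbf{u}_0\|_{L^2})$.

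The final step upgrades this $L^2$ bound to an $H^s$ bound, exploiting the crucial fact that every summand of $F^\epsilon$ carries an outer $J_\epsilon$. Using Lemma~\ref{lem0}\,(v) to place all derivatives on the mollifier kernels yields an estimate of the form $\|F^\epsilon(\mathbf{u})\|_{H^s}\leq C(\epsilon,s)(\|\mathbf{u}\|_{L^2}+\|\mathbf{u}\|_{L^2}^3)$ for every $\mathbf{u}\in L^2$. Integrating the mild form $\mathbf{u}^\epsilon(t)=J_\epsilon\mathbf{u}_0+\int_0^t F^\epsilon(\mathbf{u}^\epsilon(s))\,ds$ in $\mathbb{H}^s$ and inserting the $L^2$ bound from the previous step produces $\sup_{t\in[0,T]\cap[0,T^\ast_\epsilon)}\|\mathbf{u}^\epsilon(t)\|_{H^s}\leq C(\epsilon,s,T,\mathbf{u}_0)$, which by the standard blow-up alternative forces $T^\ast_\epsilon>T$; uniqueness is then immediate from Gronwall applied to the difference of two solutions together with the local Lipschitz property of $F^\epsilon$. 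The main obstacle I anticipate is verifying the sign of the cubic $\Delta$-term, i.e.\ the pointwise nonnegativity of $\nabla(|\mathbf{v}^\epsilon|^2\mathbf{v}^\epsilon):\nabla\mathbf{v}^\epsilon$; without this identity one would be left with a quintic contribution in the $L^2$ estimate that could not be controlled by the bi-Laplacian dissipation alone, and the whole strategy for closing the energy inequality would collapse.
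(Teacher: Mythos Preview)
Your proposal is correct and follows essentially the same route as the paper's proof: local existence from the Cauchy--Lipschitz theorem after verifying that $F^\epsilon$ is locally Lipschitz on $\mathbb{H}^s$ via Lemma~\ref{lem0}, an $\mathbb{L}^2$ energy estimate exploiting the orthogonality of the cross product and the favorable sign of the cubic term, and then an upgrade to an $\mathbb{H}^s$ bound using the smoothing of $J_\epsilon$ together with the $\mathbb{L}^2$ control. Your anticipated obstacle is not one: the identity $\nabla(|\mathbf{v}|^2\mathbf{v}):\nabla\mathbf{v}=\tfrac{1}{2}\bigl|\nabla|\mathbf{v}|^2\bigr|^2+|\mathbf{v}|^2|\nabla\mathbf{v}|^2\geq 0$ holds pointwise and is exactly what the paper uses (so your coefficient $-\tfrac12$ should read $-1$, but this is harmless).
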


\begin{proof}[\emph{\textbf{Proof}}]
For fixed $\|\mathbf{u}^{\epsilon}\|_{\mathbb{H}^s}<M$ and $\|\mathbf{v}^{\epsilon}\|_{\mathbb{H}^s}<M$, by using Lemma \ref{lem0} and Sobolev embedding $H^2(\mathbb{R}^3)\hookrightarrow L^{\infty}(\mathbb{R}^3)$, we have
\begin{equation}\label{lem1-1}
\begin{split}
&\|F^{\epsilon}(\mathbf{u}^{\epsilon})-F^{\epsilon}(\mathbf{v}^{\epsilon})\|_{\mathbb{H}^s}\lesssim \|J_{\epsilon}^2\Delta^2(\mathbf{u}^{\epsilon}-\mathbf{v}^{\epsilon})\|_{\mathbb{H}^s}+\|J_{\epsilon}^2\Delta(\mathbf{u}^{\epsilon}-\mathbf{v}^{\epsilon})\|_{\mathbb{H}^s}+\|J_{\epsilon}^2(\mathbf{u}^{\epsilon}-\mathbf{v}^{\epsilon})\|_{\mathbb{H}^s}\\
&+\|J_{\epsilon}(|J_{\epsilon}\mathbf{u}^{\epsilon}|^2J_{\epsilon}\mathbf{u}^{\epsilon}-|J_{\epsilon}\mathbf{v}^{\epsilon}|^2J_{\epsilon}\mathbf{v}^{\epsilon})\|_{\mathbb{H}^s}+\|J_{\epsilon}\Delta(|J_{\epsilon}\mathbf{u}^{\epsilon}|^2J_{\epsilon}\mathbf{u}^{\epsilon}-|J_{\epsilon}\mathbf{v}^{\epsilon}|^2J_{\epsilon}\mathbf{v}^{\epsilon})\|_{\mathbb{H}^s}\\
&+\|J_{\epsilon}[J_{\epsilon}\mathbf{u}^{\epsilon}\times \Delta (J_{\epsilon}\mathbf{u}^{\epsilon})-J_{\epsilon}\mathbf{v}^{\epsilon}\times \Delta (J_{\epsilon}\mathbf{v}^{\epsilon})]\|_{\mathbb{H}^s}\\
&\lesssim_s\epsilon^{-4}\|\mathbf{u}^{\epsilon}-\mathbf{v}^{\epsilon}\|_{\mathbb{H}^s}+\epsilon^{-2}\|\mathbf{u}^{\epsilon}-\mathbf{v}^{\epsilon}\|_{\mathbb{H}^s}+\|\mathbf{u}^{\epsilon}-\mathbf{v}^{\epsilon}\|_{\mathbb{H}^s}+\epsilon^{-s}\||J_{\epsilon}\mathbf{u}^{\epsilon}|^2J_{\epsilon}\mathbf{u}^{\epsilon}-|J_{\epsilon}\mathbf{v}^{\epsilon}|^2J_{\epsilon}\mathbf{v}^{\epsilon}\|_{\mathbb{L}^2}\\
&+\epsilon^{-(s+2)}\||J_{\epsilon}\mathbf{u}^{\epsilon}|^2J_{\epsilon}\mathbf{u}^{\epsilon}-|J_{\epsilon}\mathbf{v}^{\epsilon}|^2J_{\epsilon}\mathbf{v}^{\epsilon}\|_{\mathbb{L}^2}+\epsilon^{-s}\|J_{\epsilon}\mathbf{u}^{\epsilon}\times \Delta (J_{\epsilon}\mathbf{u}^{\epsilon})-J_{\epsilon}\mathbf{v}^{\epsilon}\times \Delta (J_{\epsilon}\mathbf{v}^{\epsilon})\|_{\mathbb{L}^2}\\
&\lesssim_{s,\epsilon}\|\mathbf{u}^{\epsilon}-\mathbf{v}^{\epsilon}\|_{\mathbb{H}^s}+\|J_{\epsilon}\mathbf{u}^{\epsilon}\|_{\mathbb{L}^{\infty}}(\|J_{\epsilon}\mathbf{u}^{\epsilon}\|_{\mathbb{L}^{\infty}}+\|J_{\epsilon}\mathbf{v}^{\epsilon}\|_{\mathbb{L}^{\infty}})\|\mathbf{u}^{\epsilon}-\mathbf{v}^{\epsilon}\|_{\mathbb{L}^2}+\|J_{\epsilon}\mathbf{v}^{\epsilon}\|_{\mathbb{L}^{\infty}}^2\|\mathbf{u}^{\epsilon}-\mathbf{v}^{\epsilon}\|_{\mathbb{L}^2}\\
&+\|J_{\epsilon}\Delta\mathbf{u}^{\epsilon}\|_{\mathbb{L}^{\infty}}^2\|\mathbf{u}^{\epsilon}-\mathbf{v}^{\epsilon}\|_{\mathbb{L}^2}+\|J_{\epsilon}\mathbf{v}^{\epsilon}\|_{\mathbb{L}^{\infty}}^2\|J_{\epsilon}\Delta(\mathbf{u}^{\epsilon}-\mathbf{v}^{\epsilon})\|_{\mathbb{L}^2}\\
&\lesssim_{s,\epsilon}(1+\|\mathbf{u}^{\epsilon}\|_{\mathbb{L}^2}^2+\|\mathbf{v}^{\epsilon}\|_{\mathbb{L}^2}^2)\|\mathbf{u}^{\epsilon}-\mathbf{v}^{\epsilon}\|_{\mathbb{H}^s}\lesssim_{M,s,\epsilon}\|\mathbf{u}^{\epsilon}-\mathbf{v}^{\epsilon}\|_{\mathbb{H}^s}.
\end{split}
\end{equation}
Thus, the mapping $F^{\epsilon}(\cdot):\mathbb{H}^s\mapsto \mathbb{H}^s$ is locally Lipschitz continuous. This allows us to apply the Picard theorem on Banach spaces to prove that system \eqref{Mod-2} admits a unique solution $\mathbf{u}^{\epsilon}\in \mathcal{C}^1([0,T_{\epsilon});\mathbb{H}^s)$ where $T_{\epsilon}$ is the maximal existence time.

Next, we shall prove that $T_{\epsilon}=\infty$. Taking the $\mathbb{L}^2$ inner product of equation \eqref{Mod-1} with $\mathbf{u}^{\epsilon}$ and using integration by parts, it follows that
\begin{equation}\label{lem1-2}
\begin{split}
&\frac{1}{2}\frac{\mathrm{d}}{\mathrm{d}t}\|\mathbf{u}^{\epsilon}\|_{\mathbb{L}^2}^2+\|\Delta J_{\epsilon}\mathbf{u}^{\epsilon}\|_{\mathbb{L}^2}^2+2\|J_{\epsilon}\mathbf{u}^{\epsilon}\|_{\mathbb{L}^4}^4+4\| J_{\epsilon}\mathbf{u}^{\epsilon}\cdot\nabla J_{\epsilon}\mathbf{u}^{\epsilon}\|_{\mathbb{L}^2}^2+2\||J_{\epsilon}\mathbf{u}^{\epsilon}||\nabla J_{\epsilon}\mathbf{u}^{\epsilon}|\|_{\mathbb{L}^2}^2\\
&=\|\nabla J_{\epsilon}\mathbf{u}^{\epsilon}\|_{\mathbb{L}^2}^2+2\|J_{\epsilon}\mathbf{u}^{\epsilon}\|_{\mathbb{L}^2}^2\leq \frac{1}{2}\|\Delta J_{\epsilon}\mathbf{u}^{\epsilon}\|_{\mathbb{L}^2}^2+C\|\mathbf{u}^{\epsilon}\|_{\mathbb{L}^2}^2.
\end{split}
\end{equation}
By using the Gronwall lemma to \eqref{lem1-2}, we conclude that
\begin{equation}\label{lem1-3}
\begin{split}
&\sup_{t\in[0,T]}\|\mathbf{u}^{\epsilon}(t)\|_{\mathbb{L}^2}^2\lesssim_{\mathbf{u}_0,T}1.
\end{split}
\end{equation}
Moreover, we note that relation \eqref{lem1-1} with $\mathbf{v}^{\epsilon}=0$ gives the bound
\begin{equation*}
\begin{split}
&\frac{\mathrm{d}}{\mathrm{d}t}\|\mathbf{u}^{\epsilon}\|_{\mathbb{H}^s}\lesssim_{s,\epsilon}(1+\|\mathbf{u}^{\epsilon}\|_{\mathbb{L}^2}^2)\|\mathbf{u}^{\epsilon}\|_{\mathbb{H}^s},
\end{split}
\end{equation*}
which together with \eqref{lem1-3} and Gronwall's lemma implies that
\begin{equation*}
\begin{split}
&\sup_{t\in[0,T]}\|\mathbf{u}^{\epsilon}(t)\|_{\mathbb{H}^s}\lesssim_{s,\epsilon,\mathbf{u}_0,T}1.
\end{split}
\end{equation*}
Thus invoking the continuation property of ODEs on Banach spaces (cf. \cite[Theorem 3.3]{majda2002vorticity}), we conclude that the solution exists globally in time. The proof is thus completed.
\end{proof}

\section{Uniformly bounded estimate}\label{sec3}
This section is devoted to establishing a uniform bound (independent of $\epsilon$) for the approximate solution $\mathbf{u}^\epsilon$ in the relevant energy space, as a preparation for proving the results stated in Theorem~\ref{the1}.

We begin by considering the case where the initial value belongs to $\mathbb{H}^2$.
\begin{lemma}\label{lem3} Let $\mathbf{u}_0\in\mathbb{H}^2$. Then there exists $T^*=T^*(\|\nabla\mathbf{u}_0\|_{\mathbb{L}^2})>0$ such that for any $\epsilon\in(0,1)$, $T'\in(0,T^*)$ and $t\in[0,T']$,
\begin{align}
&\|\mathbf{u}^{\epsilon}(t)\|_{\mathbb{H}^2}^{2}+\int_0^t\|J_{\epsilon}\mathbf{u}^{\epsilon}\|_{\mathbb{H}^4}^2\,\mathrm{d}s\lesssim_{\|\mathbf{u}_0\|_{\mathbb{H}^2},T'}1,\label{lem3-1}\\
&\|\mathbf{u}^{\epsilon}_t\|_{L^2(0,T';\mathbb{L}^{2})}^2\lesssim_{\|\mathbf{u}_0\|_{\mathbb{H}^2},T'}1.\label{lem3-2}
\end{align}
\end{lemma}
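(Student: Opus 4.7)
The proof is a chain of energy estimates for the regularized system \eqref{Mod-1}, with every constant independent of $\epsilon$. Set $\mathbf{v}^\epsilon := J_\epsilon \mathbf{u}^\epsilon$ for brevity; since $J_\epsilon$ is self-adjoint on $\mathbb{L}^2$ and a contraction on every $\mathbb{H}^s$, in any $\mathbb{L}^2$-inner product a mollifier may be freely shifted between factors. The $\mathbb{L}^2$ level has already been handled inside the proof of Lemma~\ref{lem1}: identity \eqref{lem1-2} together with Gronwall yields the $\epsilon$-uniform bound
\begin{equation*}
\sup_{[0,T']}\|\mathbf{u}^\epsilon\|_{\mathbb{L}^2}^2 + \int_0^{T'} \|\Delta \mathbf{v}^\epsilon\|_{\mathbb{L}^2}^2\,\mathrm{d}s \lesssim_{\mathbf{u}_0,T'} 1,
\end{equation*}
valid globally in time.

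The decisive step is an $\mathbb{H}^1$-level estimate in which $T^*$ is determined. I would pair \eqref{Mod-1} in $\mathbb{L}^2$ with $-\Delta \mathbf{u}^\epsilon$. The biharmonic term supplies the dissipation $\|\nabla \Delta \mathbf{v}^\epsilon\|_{\mathbb{L}^2}^2$; the cross-product contribution vanishes because $\mathbf{v}^\epsilon \times \Delta \mathbf{v}^\epsilon \perp \Delta \mathbf{v}^\epsilon$; and the penalty $2((1-|\mathbf{v}^\epsilon|^2)\mathbf{v}^\epsilon, -\Delta \mathbf{v}^\epsilon)$ contributes $2\|\nabla \mathbf{v}^\epsilon\|^2$ on the right plus the non-negative pieces $\|\nabla|\mathbf{v}^\epsilon|^2\|^2 + 2\||\mathbf{v}^\epsilon||\nabla \mathbf{v}^\epsilon|\|^2$ which move to the left. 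The delicate cubic $-2(\Delta(|\mathbf{v}^\epsilon|^2\mathbf{v}^\epsilon),\Delta \mathbf{v}^\epsilon)$ is rewritten via a single integration by parts as $2(\nabla(|\mathbf{v}^\epsilon|^2\mathbf{v}^\epsilon), \nabla \Delta \mathbf{v}^\epsilon)$; then by H\"older and $\mathbb{H}^1 \hookrightarrow \mathbb{L}^6$,
\begin{equation*}
\|\nabla(|\mathbf{v}^\epsilon|^2\mathbf{v}^\epsilon)\|_{\mathbb{L}^2}^2 \lesssim \|\mathbf{v}^\epsilon\|_{\mathbb{L}^6}^4 \|\nabla \mathbf{v}^\epsilon\|_{\mathbb{L}^6}^2 \lesssim \|\nabla \mathbf{v}^\epsilon\|^4 \|\Delta \mathbf{v}^\epsilon\|^2 \lesssim \|\nabla \mathbf{v}^\epsilon\|^5 \|\nabla \Delta \mathbf{v}^\epsilon\|,
\end{equation*}
where the last step uses the interpolation $\|\Delta \mathbf{v}^\epsilon\|^2 \lesssim \|\nabla \mathbf{v}^\epsilon\|\|\nabla \Delta \mathbf{v}^\epsilon\|$. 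Absorbing a fraction of $\|\nabla \Delta \mathbf{v}^\epsilon\|^2$ into the left via Young leaves the autonomous inequality $\frac{\mathrm{d}}{\mathrm{d}t}\|\nabla \mathbf{u}^\epsilon\|^2 + \tfrac12\|\nabla \Delta \mathbf{v}^\epsilon\|^2 \leq C(1+\|\nabla \mathbf{u}^\epsilon\|^{10})$. Comparing with the scalar ODE $z' = C(1+z^5)$ with $z(0)=\|\nabla \mathbf{u}_0\|_{\mathbb{L}^2}^2$ produces an existence time $T^*$ depending only on $\|\nabla \mathbf{u}_0\|_{\mathbb{L}^2}$, on which $\|\nabla \mathbf{u}^\epsilon\|_{L^\infty_t \mathbb{L}^2}$ and $\|\nabla \Delta \mathbf{v}^\epsilon\|_{L^2_t \mathbb{L}^2}$ are $\epsilon$-uniformly controlled.

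With the $\mathbb{H}^1$ bound in hand, the $\mathbb{H}^2$ estimate follows by pairing \eqref{Mod-1} in $\mathbb{L}^2$ with $\Delta^2 \mathbf{u}^\epsilon$: the biharmonic produces $\|\Delta^2 \mathbf{v}^\epsilon\|_{\mathbb{L}^2}^2$, and the cubic is treated using the pointwise bound $|\Delta(|\mathbf{v}^\epsilon|^2\mathbf{v}^\epsilon)| \lesssim |\mathbf{v}^\epsilon|^2|\Delta \mathbf{v}^\epsilon| + |\mathbf{v}^\epsilon||\nabla \mathbf{v}^\epsilon|^2$ together with the same $\mathbb{L}^6$-H\"older estimates. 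Thanks to the previous step these yield $\|\Delta(|\mathbf{v}^\epsilon|^2\mathbf{v}^\epsilon)\|_{\mathbb{L}^2}^2 \lesssim \|\nabla \Delta \mathbf{v}^\epsilon\|_{\mathbb{L}^2}^2 \lesssim \|\Delta \mathbf{v}^\epsilon\|\|\Delta^2 \mathbf{v}^\epsilon\|$; the cross product is handled by the Gagliardo--Nirenberg interpolation $\|\mathbf{v}^\epsilon\|_{\mathbb{L}^\infty} \lesssim \|\nabla \mathbf{v}^\epsilon\|^{1/2}\|\Delta \mathbf{v}^\epsilon\|^{1/2}$. After a final Young split, the right-hand side becomes linear in $\|\Delta \mathbf{u}^\epsilon\|^2$ with $L^1_t$-coefficients on $[0,T^*]$, so Gronwall closes \eqref{lem3-1}. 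The bound \eqref{lem3-2} follows by taking $\mathbb{L}^2$-norms of every right-hand-side term in \eqref{Mod-1} pointwise in $t$ and integrating, the crucial input being $\int_0^{T'}\|\Delta^2 \mathbf{v}^\epsilon\|_{\mathbb{L}^2}^2\,\mathrm{d}s \lesssim 1$ just obtained.

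The main obstacle is structural: a blunt $\mathbb{H}^2$ attack on $\Delta(|\mathbf{v}^\epsilon|^2\mathbf{v}^\epsilon)$ via the algebra property of $\mathbb{H}^2$ would force the blow-up time to scale with $\|\mathbf{u}_0\|_{\mathbb{H}^2}$, whereas the statement demands $T^* = T^*(\|\nabla \mathbf{u}_0\|_{\mathbb{L}^2})$. Running the $\mathbb{H}^1$ estimate first, with the single integration by parts that relocates one derivative onto the dissipation $\|\nabla \Delta \mathbf{v}^\epsilon\|_{\mathbb{L}^2}^2$, is precisely what keeps the critical polynomial in $\|\nabla \mathbf{u}^\epsilon\|$ alone, and defers $\|\Delta \mathbf{u}^\epsilon\|$ to a subsequent linear Gronwall argument on the same time interval.
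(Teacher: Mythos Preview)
Your proposal is correct and follows the paper's three-tier scheme ($\mathbb{L}^2 \to \mathbb{H}^1 \to \mathbb{H}^2$, with $T^*$ fixed at the $\mathbb{H}^1$ step via a polynomial ODE and a linear Gronwall at $\mathbb{H}^2$) almost exactly. The one tactical difference is in the cubic at the $\mathbb{H}^1$ level: the paper expands $-2(\Delta(|\mathbf{v}^\epsilon|^2\mathbf{v}^\epsilon),\Delta\mathbf{v}^\epsilon)$ directly, keeps the sign-definite pieces $4\|\mathbf{v}^\epsilon\cdot\Delta\mathbf{v}^\epsilon\|_{\mathbb{L}^2}^2+2\||\mathbf{v}^\epsilon||\Delta\mathbf{v}^\epsilon|\|_{\mathbb{L}^2}^2$ on the left, and controls the mixed remainder via the Gagliardo--Nirenberg inequality $\|\nabla f\|_{L^4}^4\lesssim\|\nabla\Delta f\|_{L^2}^{3/2}\|\nabla f\|_{L^2}^{5/2}$, whereas you integrate by parts once and estimate $\|\nabla(|\mathbf{v}^\epsilon|^2\mathbf{v}^\epsilon)\|_{\mathbb{L}^2}$ through $\mathbb{L}^6$-H\"older and the interpolation $\|\Delta\mathbf{v}^\epsilon\|_{\mathbb{L}^2}^2\lesssim\|\nabla\mathbf{v}^\epsilon\|_{\mathbb{L}^2}\|\nabla\Delta\mathbf{v}^\epsilon\|_{\mathbb{L}^2}$. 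Both routes land on the same autonomous inequality with right-hand side $C(1+\|\nabla\mathbf{u}^\epsilon\|_{\mathbb{L}^2}^{10})$ and hence the same $T^*$; your version is slightly slicker since it avoids tracking the favorable sign.
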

\begin{proof}[\emph{\textbf{Proof}}] Similar to the proofs of \eqref{lem1-2} and \eqref{lem1-3}, it is straightforward to obtain that for all $t\in[0,T]$,
\begin{equation}\label{lem3-3}
\begin{split}
&\|\mathbf{u}^{\epsilon}\|_{\mathbb{L}^2}^2+\int_0^t\|J_{\epsilon}\mathbf{u}^{\epsilon}\|_{\mathbb{H}^2}^2\lesssim_{\|\mathbf{u}_0\|_{\mathbb{L}^2},T}1.
\end{split}
\end{equation}
Next, multiplying both sides of equation \eqref{Mod-1} by $-\Delta\mathbf{u}^{\epsilon}$ and using integration by parts, it follows that
\begin{equation}\label{lem3-4}
\begin{split}
&\frac{1}{2}\frac{\mathrm{d}}{\mathrm{d}t}\|\nabla\mathbf{u}^{\epsilon}\|_{\mathbb{L}^2}^2+\|\nabla\Delta J_{\epsilon}\mathbf{u}^{\epsilon}\|_{\mathbb{L}^2}^2+4\| J_{\epsilon}\mathbf{u}^{\epsilon}\cdot\nabla J_{\epsilon}\mathbf{u}^{\epsilon}\|_{\mathbb{L}^2}^2+2\||J_{\epsilon}\mathbf{u}^{\epsilon}||\nabla J_{\epsilon}\mathbf{u}^{\epsilon}|\|_{\mathbb{L}^2}^2\\
&=\|\Delta J_{\epsilon}\mathbf{u}^{\epsilon}\|_{\mathbb{L}^2}^2+2\|\nabla J_{\epsilon}\mathbf{u}^{\epsilon}\|_{\mathbb{L}^2}^2-2(\Delta(|J_{\epsilon}\mathbf{u}^{\epsilon}|^2J_{\epsilon}\mathbf{u}^{\epsilon}),\Delta J_{\epsilon}\mathbf{u}^{\epsilon})_{\mathbb{L}^2}.
\end{split}
\end{equation}
Here, regarding the above equation, we used the fact that $$(J_{\epsilon}(J_{\epsilon}\mathbf{u}^{\epsilon}\times \Delta J_{\epsilon}\mathbf{u}^{\epsilon}),\Delta \mathbf{u}^{\epsilon})_{\mathbb{L}^2}=(J_{\epsilon}\mathbf{u}^{\epsilon}\times \Delta J_{\epsilon}\mathbf{u}^{\epsilon},\Delta J_{\epsilon}\mathbf{u}^{\epsilon})_{\mathbb{L}^2}=0.$$
Through direct computation, we have
\begin{equation*}
\begin{split}
&2(\Delta(|J_{\epsilon}\mathbf{u}^{\epsilon}|^2J_{\epsilon}\mathbf{u}^{\epsilon}),\Delta J_{\epsilon} \mathbf{u}^{\epsilon})_{\mathbb{L}^2}=4\| J_{\epsilon}\mathbf{u}^{\epsilon}\cdot\Delta J_{\epsilon}\mathbf{u}^{\epsilon}\|_{\mathbb{L}^2}^2+2\||J_{\epsilon}\mathbf{u}^{\epsilon}||\Delta J_{\epsilon}\mathbf{u}^{\epsilon}|\|_{\mathbb{L}^2}^2\\
&+8(\nabla J_{\epsilon}\mathbf{u}^{\epsilon}(J_{\epsilon}\mathbf{u}^{\epsilon}\cdot\nabla J_{\epsilon}\mathbf{u}^{\epsilon})^{\top},\Delta J_{\epsilon}\mathbf{u}^{\epsilon})_{\mathbb{L}^2}+4(|\nabla J_{\epsilon}\mathbf{u}^{\epsilon}|^2J_{\epsilon}\mathbf{u}^{\epsilon},\Delta J_{\epsilon}\mathbf{u}^{\epsilon})_{\mathbb{L}^2}.
\end{split}
\end{equation*}
Combining this with \eqref{lem3-4} and using the Gagliardo-Nirenberg interpolation inequality \cite{gagliardo1959ulteriori,gilbarg1977elliptic}
\begin{equation*}
\begin{split}
&\|\nabla f\|_{L^4(\mathbb{R}^3)}^4\lesssim\|\nabla\Delta f\|_{L^2(\mathbb{R}^3)}^{\frac{3}{2}}\|\nabla f\|_{L^2(\mathbb{R}^3)}^{\frac{5}{2}},~f\in H^3(\mathbb{R}^3),
\end{split}
\end{equation*}
it follows from \eqref{lem3-4} that
\begin{equation}\label{lem3-5}
\begin{split}
&\frac{\mathrm{d}}{\mathrm{d}t}\|\nabla\mathbf{u}^{\epsilon}\|_{\mathbb{L}^2}^2+\|\nabla\Delta J_{\epsilon}\mathbf{u}^{\epsilon}\|_{\mathbb{L}^2}^2+\||J_{\epsilon}\mathbf{u}^{\epsilon}||\Delta J_{\epsilon}\mathbf{u}^{\epsilon}|\|_{\mathbb{L}^2}^2\\
&\lesssim\|\Delta J_{\epsilon}\mathbf{u}^{\epsilon}\|_{\mathbb{L}^2}^2+\|\nabla J_{\epsilon}\mathbf{u}^{\epsilon}\|_{\mathbb{L}^2}^2+\||J_{\epsilon}\mathbf{u}^{\epsilon}||\Delta J_{\epsilon}\mathbf{u}^{\epsilon}|\|_{\mathbb{L}^2}\|\nabla J_{\epsilon}\mathbf{u}^{\epsilon}\|_{\mathbb{L}^4}^2\\
&\leq\varepsilon\|\nabla\Delta J_{\epsilon}\mathbf{u}^{\epsilon}\|_{\mathbb{L}^2}^2+\varepsilon\||J_{\epsilon}\mathbf{u}^{\epsilon}||\Delta J_{\epsilon}\mathbf{u}^{\epsilon}|\|_{\mathbb{L}^2}^2+C_{\varepsilon}\|\nabla J_{\epsilon}\mathbf{u}^{\epsilon}\|_{\mathbb{L}^2}^2+C_{\varepsilon}\|\nabla J_{\epsilon}\mathbf{u}^{\epsilon}\|_{\mathbb{L}^4}^4\\
&\leq\varepsilon\|\nabla\Delta J_{\epsilon}\mathbf{u}^{\epsilon}\|_{\mathbb{L}^2}^2+\varepsilon\||J_{\epsilon}\mathbf{u}^{\epsilon}||\Delta J_{\epsilon}\mathbf{u}^{\epsilon}|\|_{\mathbb{L}^2}^2+C_{\varepsilon}\|\nabla J_{\epsilon}\mathbf{u}^{\epsilon}\|_{\mathbb{L}^2}^2+C_{\varepsilon}\|\nabla J_{\epsilon}\mathbf{u}^{\epsilon}\|_{\mathbb{L}^2}^{10}.
\end{split}
\end{equation}
By choosing $\varepsilon$ sufficiently small, and using Lemma \ref{lem0} as well as estimate \eqref{lem3-3}, it follows from \eqref{lem3-5} that
\begin{equation*}
\begin{split}
&\|\nabla\mathbf{u}^{\epsilon}(t)\|_{\mathbb{L}^2}^2+\int_0^t\|\nabla\Delta J_{\epsilon}\mathbf{u}^{\epsilon}\|_{\mathbb{L}^2}^2\,\mathrm{d}s\leq\|\nabla\mathbf{u}^{\epsilon}_0\|_{\mathbb{L}^2}^2+C_{\|\mathbf{u}_0\|_{\mathbb{L}^2},T}+C\int_0^t\|\nabla\mathbf{u}^{\epsilon}\|_{\mathbb{L}^2}^{10}\,\mathrm{d}s.
\end{split}
\end{equation*}
Thus, by using the Bihari inequality \cite{bihari1956}, there exists a unique positive constant
\begin{equation*}
\begin{split}
T^*\asymp\frac{1}{\left(\|\nabla\mathbf{u}_0\|_{\mathbb{L}^2}^2+C_{\|\mathbf{u}_0\|_{\mathbb{L}^2},T}\right)^4},
\end{split}
\end{equation*}
such that for any $T'<T^*$ and $t\in[0,T']$,
\begin{equation}\label{lem3-6}
\begin{split}
&\|\nabla\mathbf{u}^{\epsilon}(t)\|_{\mathbb{L}^2}^2+\int_0^t\|\nabla\Delta J_{\epsilon}\mathbf{u}^{\epsilon}\|_{\mathbb{L}^2}^2\,\mathrm{d}s\lesssim_{\|\mathbf{u}_0\|_{\mathbb{H}^1},T'}1.
\end{split}
\end{equation}
Next, multiplying both sides of equation \eqref{Mod-1} by $\Delta^2\mathbf{u}^{\epsilon}$ and using the Sobolev embedding $H^1(\mathbb{R}^3)\hookrightarrow L^6(\mathbb{R}^3)$, the Gagliardo-Nirenberg inequality
\begin{equation}\label{lem3-7}
\begin{split}
&\|f\|_{L^{\infty}}\lesssim\|f\|_{H^1}^{\frac{1}{2}}\|f\|_{H^2}^{\frac{1}{2}},~f\in H^2(\mathbb{R}^3),
\end{split}
\end{equation}
the Sobolev inequality $\|f\|_{L^6}\lesssim\|\nabla f\|_{L^2},~f\in H^1(\mathbb{R}^3)$ \cite{7brezis2011functional} and the fact that $\|D^2 f\|_{L^2}\asymp\|\Delta f\|_{L^2},~f\in H^2(\mathbb{R}^3)$, it follows that
\begin{equation}\label{lem3-8}
\begin{split}
&\frac{1}{2}\frac{\mathrm{d}}{\mathrm{d}t}\|\Delta\mathbf{u}^{\epsilon}\|_{\mathbb{L}^2}^2+\|\Delta^2 J_{\epsilon}\mathbf{u}^{\epsilon}\|_{\mathbb{L}^2}^2=\|\nabla\Delta J_{\epsilon}\mathbf{u}^{\epsilon}\|_{\mathbb{L}^2}^2+2\|\Delta J_{\epsilon}\mathbf{u}^{\epsilon}\|_{\mathbb{L}^2}^2-2(|J_{\epsilon}\mathbf{u}^{\epsilon}|^2J_{\epsilon}\mathbf{u}^{\epsilon},\Delta^2 J_{\epsilon}\mathbf{u}^{\epsilon})_{\mathbb{L}^2}\\
&+2(\Delta(|J_{\epsilon}\mathbf{u}^{\epsilon}|^2J_{\epsilon}\mathbf{u}^{\epsilon}),\Delta^2 J_{\epsilon}\mathbf{u}^{\epsilon})_{\mathbb{L}^2}-(J_{\epsilon}\mathbf{u}^{\epsilon}\times \Delta J_{\epsilon}\mathbf{u}^{\epsilon},\Delta^2 J_{\epsilon}\mathbf{u}^{\epsilon})_{\mathbb{L}^2}\\
&\leq\varepsilon\|\Delta^2 J_{\epsilon}\mathbf{u}^{\epsilon}\|_{\mathbb{L}^2}^2+C_{\varepsilon}\|\Delta \mathbf{u}^{\epsilon}\|_{\mathbb{L}^2}^2+C_{\varepsilon}\|J_{\epsilon}\mathbf{u}^{\epsilon}\|_{\mathbb{L}^6}^6+C_{\varepsilon}\|\Delta(|J_{\epsilon}\mathbf{u}^{\epsilon}|^2J_{\epsilon}\mathbf{u}^{\epsilon})\|_{\mathbb{L}^2}^2\\
&+C_{\varepsilon}\|J_{\epsilon}\mathbf{u}^{\epsilon}\times \Delta J_{\epsilon}\mathbf{u}^{\epsilon}\|_{\mathbb{L}^2}^2\\
&\leq\varepsilon\|\Delta^2 J_{\epsilon}\mathbf{u}^{\epsilon}\|_{\mathbb{L}^2}^2+C_{\varepsilon}\|\Delta \mathbf{u}^{\epsilon}\|_{\mathbb{L}^2}^2+C_{\varepsilon}\|\mathbf{u}^{\epsilon}\|_{\mathbb{H}^1}^6+C_{\varepsilon}\|J_{\epsilon}\mathbf{u}^{\epsilon}\|_{\mathbb{L}^{\infty}}^4\|\Delta\mathbf{u}^{\epsilon}\|_{\mathbb{L}^2}^2\\
&+C_{\varepsilon}\|\nabla J_{\epsilon}\mathbf{u}^{\epsilon}\|_{\mathbb{L}^6}^4\|J_{\epsilon}\mathbf{u}^{\epsilon}\|_{\mathbb{L}^6}^2+C_{\varepsilon}\|J_{\epsilon}\mathbf{u}^{\epsilon}\|_{\mathbb{L}^{\infty}}^2\|\Delta\mathbf{u}^{\epsilon}\|_{\mathbb{L}^{2}}^2\\
&\leq\varepsilon\|\Delta^2 J_{\epsilon}\mathbf{u}^{\epsilon}\|_{\mathbb{L}^2}^2+C_{\varepsilon}\|\mathbf{u}^{\epsilon}\|_{\mathbb{H}^1}^6+C_{\varepsilon}(1+\|J_{\epsilon}\mathbf{u}^{\epsilon}\|_{\mathbb{H}^{1}}^2\|J_{\epsilon}\mathbf{u}^{\epsilon}\|_{\mathbb{H}^{2}}^2+\|J_{\epsilon}\mathbf{u}^{\epsilon}\|_{\mathbb{H}^{2}}^2)\|\Delta\mathbf{u}^{\epsilon}\|_{\mathbb{L}^2}^2.
\end{split}
\end{equation}
Therefore, by choosing sufficiently small $\varepsilon$, applying the Gronwall lemma to \eqref{lem3-8}, and using the estimates \eqref{lem3-3} and \eqref{lem3-6}, we obtain the estimate \eqref{lem3-1}.

Finally, for \eqref{lem3-2}, by applying the estimate \eqref{lem3-1} , we deduce from equation \eqref{Mod-1} that
\begin{equation*}
\begin{split}
&\|\mathbf{u}^{\epsilon}_t\|_{L^2(0,T';\mathbb{L}^{2})}^2\lesssim\|J_{\epsilon}\mathbf{u}^{\epsilon}\|_{L^2(0,T';\mathbb{H}^{4})}^2+\||J_{\epsilon}\mathbf{u}^{\epsilon}|^2J_{\epsilon}\mathbf{u}^{\epsilon}\|_{L^2(0,T';\mathbb{L}^{2})}^2\\
&+\|\Delta(|J_{\epsilon}\mathbf{u}^{\epsilon}|^2J_{\epsilon}\mathbf{u}^{\epsilon})\|_{L^2(0,T';\mathbb{L}^{2})}^2+\|J_{\epsilon}\mathbf{u}^{\epsilon}\times \Delta J_{\epsilon}\mathbf{u}^{\epsilon}\|_{L^2(0,T';\mathbb{L}^{2})}^2\\
&\lesssim_{T'}\|J_{\epsilon}\mathbf{u}^{\epsilon}\|_{L^2(0,T';\mathbb{H}^{4})}^2+\|J_{\epsilon}\mathbf{u}^{\epsilon}\|_{L^{\infty}(0,T';\mathbb{H}^{1})}^6+\|J_{\epsilon}\mathbf{u}^{\epsilon}\|_{L^{\infty}(0,T';\mathbb{H}^{2})}^6+\|J_{\epsilon}\mathbf{u}^{\epsilon}\|_{L^{\infty}(0,T';\mathbb{H}^{2})}^4\\
&\lesssim_{\|\mathbf{u}_0\|_{\mathbb{H}^2},T'}1.
\end{split}
\end{equation*}
The proof of Lemma \ref{lem3} is thus completed.
\end{proof}

In order to ultimately obtain a classical solution, and even any smooth solution, we need to obtain higher-order uniform energy estimates. The detailed result is as follows.
\begin{lemma}\label{lem4} Let $\mathbf{u}_0\in\mathbb{H}^k$ where the integer $k\geq3$. Then there exists $T^*=T^*(\|\nabla\mathbf{u}_0\|_{\mathbb{L}^2})>0$ such that for any $\epsilon\in(0,1)$, $T'\in(0,T^*)$ and $t\in[0,T']$,
\begin{align}
&\|\mathbf{u}^{\epsilon}(t)\|_{\mathbb{H}^k}^{2}+\int_0^t\|J_{\epsilon}\mathbf{u}^{\epsilon}\|_{\mathbb{H}^{k+2}}^2\,\mathrm{d}s\lesssim_{\|\mathbf{u}_0\|_{\mathbb{H}^k},T'}1,\label{lem4-11}\\
&\|\mathbf{u}^{\epsilon}_t\|_{L^2(0,T';\mathbb{H}^{k-2})}^2\lesssim_{\|\mathbf{u}_0\|_{\mathbb{H}^k},T'}1.\label{lem4-22}
\end{align}
\end{lemma}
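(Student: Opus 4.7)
The plan is to prove the lemma by induction on $k\ge 2$, treating Lemma~\ref{lem3} as the base case $k=2$ and promoting the estimate from level $k-1$ to level $k$ on the \emph{same} time interval $[0,T']$ produced there. This rigidity on $T^*$ is crucial: because $T^*$ is allowed to depend only on $\|\nabla\mathbf{u}_0\|_{\mathbb{L}^2}$, every higher-order bound must close via a Gronwall inequality that is \emph{linear} in the top-order norm, with the $\mathbb{H}^2\cap L^2_t\mathbb{H}^4$ control delivered by Lemma~\ref{lem3} absorbing all cubic weights.

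The main step is a differential energy identity obtained by pairing~\eqref{Mod-1} with $(-\Delta)^{k}\mathbf{u}^{\epsilon}$ in $\mathbb{L}^2$. Moving the outer mollifier onto the test function via Lemma~\ref{lem0}(iii) and writing $\mathbf{v}:=J_{\epsilon}\mathbf{u}^{\epsilon}$, I would produce an identity of the schematic shape
\[
\tfrac{1}{2}\tfrac{\mathrm{d}}{\mathrm{d}t}\|\Lambda^{k}\mathbf{u}^{\epsilon}\|_{\mathbb{L}^2}^{2}+\|\Lambda^{k+2}\mathbf{v}\|_{\mathbb{L}^2}^{2}=\|\Lambda^{k+1}\mathbf{v}\|_{\mathbb{L}^2}^{2}+\mathcal{N}_{1}+\mathcal{N}_{2}+\mathcal{N}_{3},
\]
where $\mathcal{N}_1$, $\mathcal{N}_2$, $\mathcal{N}_3$ collect the contributions from $(1-|\mathbf{v}|^{2})\mathbf{v}$, $\Delta(|\mathbf{v}|^{2}\mathbf{v})$, and the gyromagnetic term $\mathbf{v}\times\Delta\mathbf{v}$, respectively. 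The source term $\|\Lambda^{k+1}\mathbf{v}\|_{\mathbb{L}^2}^{2}$ is interpolated between $\|\Lambda^{k}\mathbf{u}^{\epsilon}\|_{\mathbb{L}^{2}}^{2}$ and $\|\Lambda^{k+2}\mathbf{v}\|_{\mathbb{L}^{2}}^{2}$. For $\mathcal{N}_1$ and $\mathcal{N}_2$ I would apply the Moser/Kato--Ponce tame product estimate, keep a single factor in $L^{\infty}$ (bounded by $\|\mathbf{v}\|_{\mathbb{H}^{2}}\lesssim 1$ from Lemma~\ref{lem3}), and absorb two derivatives into the dissipation through Young's inequality.

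The most delicate point, and what I expect to be the main obstacle, is the cross-product term $\mathcal{N}_{3}$. The pointwise cancellation $(\mathbf{v}\times\Delta\mathbf{v})\cdot\Delta\mathbf{v}=0$ that killed this term at $k=2$ is no longer enough; I would instead integrate by parts and redistribute derivatives, using the antisymmetry of $\times$ so that the top-order contribution $(\mathbf{v}\times\Lambda^{k}\Delta\mathbf{v},\Lambda^{k}\mathbf{v})_{\mathbb{L}^2}$ cancels, leaving only a Kato--Ponce commutator $([\Lambda^{k},\mathbf{v}\times]\Delta\mathbf{v},\Lambda^{k}\mathbf{v})_{\mathbb{L}^{2}}$ in which the worst derivative lands on $\mathbf{v}$ rather than on $\Delta\mathbf{v}$. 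Combining the commutator estimate with the Gagliardo--Nirenberg interpolation $\|\nabla\mathbf{v}\|_{L^{\infty}}\lesssim\|\mathbf{v}\|_{\mathbb{H}^{2}}^{1/2}\|\mathbf{v}\|_{\mathbb{H}^{4}}^{1/2}$ should bound $\mathcal{N}_3$ by a quantity absorbable into the dissipation plus an $L^{1}_t$-integrable weight (controlled by~\eqref{lem3-1}) multiplying $\|\Lambda^{k}\mathbf{u}^{\epsilon}\|_{\mathbb{L}^{2}}^{2}$. Gronwall's lemma then delivers~\eqref{lem4-11}.

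Finally,~\eqref{lem4-22} follows at once: reading $\mathbf{u}^{\epsilon}_{t}$ off~\eqref{Mod-1} in $\mathbb{H}^{k-2}$, the linear term $J_{\epsilon}\Delta^{2}J_{\epsilon}\mathbf{u}^{\epsilon}$ is in $L^{2}(0,T';\mathbb{H}^{k-2})$ by the dissipation bound just produced by~\eqref{lem4-11}, while the cubic and cross terms are handled by the same tame product estimates, now using the $L^{\infty}_{t}\mathbb{H}^{k}$ control just established. No new ideas are required beyond the energy step above.
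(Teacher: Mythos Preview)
Your approach is essentially the same as the paper's: pair~\eqref{Mod-1} with $\Lambda^{2k}\mathbf{u}^{\epsilon}$, handle the cubic terms by the Moser product estimate with one factor in $L^{\infty}\hookleftarrow\mathbb{H}^{2}$, exploit the antisymmetry of $\times$ for the gyromagnetic term, and close by Gronwall using the lower-order bounds already in hand.

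One imprecision worth flagging: the expression $(\mathbf{v}\times\Lambda^{k}\Delta\mathbf{v},\Lambda^{k}\mathbf{v})_{\mathbb{L}^{2}}$ does \emph{not} vanish by antisymmetry alone. What the paper does (and what your phrase ``integrate by parts and redistribute derivatives'' should really mean) is first rewrite $\mathbf{v}\times\Delta\mathbf{v}=\operatorname{div}(\mathbf{v}\times\nabla\mathbf{v})$ and move one derivative onto the test function, obtaining $(\Lambda^{k}(\mathbf{v}\times\nabla\mathbf{v}),\nabla\Lambda^{k}\mathbf{v})_{\mathbb{L}^{2}}$; now the top-order piece $(\mathbf{v}\times\nabla\Lambda^{k}\mathbf{v},\nabla\Lambda^{k}\mathbf{v})_{\mathbb{L}^{2}}$ genuinely vanishes, and the remainder is a commutator/Leibniz tail as you describe. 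There is also a small execution difference: the paper's Leibniz estimate on this tail produces the coarser weight $\|J_{\epsilon}\mathbf{u}^{\epsilon}\|_{\mathbb{H}^{k+1}}^{2}$, which is only known to be $L^{1}_{t}$ once the inductive step at level $k-1$ has supplied $J_{\epsilon}\mathbf{u}^{\epsilon}\in L^{2}_{t}\mathbb{H}^{k+1}$, so the paper's induction is genuinely step-by-step $(k-1)\to k$; your sharper commutator bound together with the interpolation of $\|\nabla\mathbf{v}\|_{\mathbb{L}^{\infty}}$ would let you jump directly from Lemma~\ref{lem3} to any $k$, making the induction trivial. Either route is fine.
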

\begin{proof}[\emph{\textbf{Proof}}] Multiplying both sides of equation \eqref{Mod-1} by $\Lambda^{2k}\mathbf{u}^{\epsilon}$ and using integration by parts, it follows that
\begin{equation*}
\begin{split}
&\frac{1}{2}\frac{\mathrm{d}}{\mathrm{d}t}\|\Lambda^{k}\mathbf{u}^{\epsilon}\|_{\mathbb{L}^2}^2+\|\Delta\Lambda^{k} J_{\epsilon}\mathbf{u}^{\epsilon}\|_{\mathbb{L}^2}^2\\
&=\|\nabla\Lambda^{k} J_{\epsilon}\mathbf{u}^{\epsilon}\|_{\mathbb{L}^2}^2+2\|\Lambda^k J_{\epsilon}\mathbf{u}^{\epsilon}\|_{\mathbb{L}^2}^2-2(\Lambda^{k}(|J_{\epsilon}\mathbf{u}^{\epsilon}|^2J_{\epsilon}\mathbf{u}^{\epsilon}),\Lambda^{k} J_{\epsilon}\mathbf{u}^{\epsilon})_{\mathbb{L}^2}\\
&+2(\Lambda^{k}(|J_{\epsilon}\mathbf{u}^{\epsilon}|^2J_{\epsilon}\mathbf{u}^{\epsilon}),\Delta\Lambda^{k} J_{\epsilon}\mathbf{u}^{\epsilon})_{\mathbb{L}^2}+(\Lambda^{k}(J_{\epsilon}\mathbf{u}^{\epsilon}\times \nabla J_{\epsilon}\mathbf{u}^{\epsilon}),\nabla\Lambda^{k} J_{\epsilon}\mathbf{u}^{\epsilon})_{\mathbb{L}^2}.
\end{split}
\end{equation*}
Since 
\begin{equation}\label{lem4-0}
\begin{split}
&\|\nabla\Lambda^k\cdot\|_{L^2}^2=-(\Lambda^k\cdot,\Delta\Lambda^k\cdot)_{L^2}\leq\varepsilon\|\Delta\Lambda^k\cdot\|_{L^2}^2+C_{\varepsilon}\|\Lambda^k\cdot\|_{L^2}^2,
\end{split}
\end{equation}
it follows that
\begin{equation}\label{lem4-3}
\begin{split}
&\frac{\mathrm{d}}{\mathrm{d}t}\|\mathbf{u}^{\epsilon}\|_{\mathbb{H}^k}^2+\|\Delta J_{\epsilon}\mathbf{u}^{\epsilon}\|_{\mathbb{H}^k}^2\lesssim\|J_{\epsilon}\mathbf{u}^{\epsilon}\|_{\mathbb{H}^k}^2+\||J_{\epsilon}\mathbf{u}^{\epsilon}|^2J_{\epsilon}\mathbf{u}^{\epsilon}\|_{\mathbb{H}^k}\| J_{\epsilon}\mathbf{u}^{\epsilon}\|_{\mathbb{H}^k}\\
&+\||J_{\epsilon}\mathbf{u}^{\epsilon}|^2J_{\epsilon}\mathbf{u}^{\epsilon}\|_{\mathbb{H}^k}^2+|(\Lambda^{k}(J_{\epsilon}\mathbf{u}^{\epsilon}\times \nabla J_{\epsilon}\mathbf{u}^{\epsilon}),\nabla\Lambda^{k} J_{\epsilon}\mathbf{u}^{\epsilon})_{\mathbb{L}^2}|\\
&:=\|J_{\epsilon}\mathbf{u}^{\epsilon}\|_{\mathbb{H}^k}^2+A_1+A_2+A_3.
\end{split}
\end{equation}
For $A_1$, by using the Moser estimate \cite{bahouri2011fourier}
\begin{equation*}
\begin{split}
&\|fg\|_{H^s}\lesssim\|f\|_{L^{\infty}}\|g\|_{H^s}+\|g\|_{L^{\infty}}\|f\|_{H^s},~f,~g\in H^s(\mathbb{R}^d)\cap L^{\infty}(\mathbb{R}^d),~s>0,
\end{split}
\end{equation*}
and inequality \eqref{lem3-7}, it follows that
\begin{equation}\label{lem4-4}
\begin{split}
&A_1\lesssim(\|J_{\epsilon}\mathbf{u}^{\epsilon}\cdot J_{\epsilon}\mathbf{u}^{\epsilon}\|_{\mathbb{H}^k}\|J_{\epsilon}\mathbf{u}^{\epsilon}\|_{\mathbb{L}^{\infty}}+\|J_{\epsilon}\mathbf{u}^{\epsilon}\|_{\mathbb{L}^{\infty}}^2\|J_{\epsilon}\mathbf{u}^{\epsilon}\|_{\mathbb{H}^k})\|J_{\epsilon}\mathbf{u}^{\epsilon}\|_{\mathbb{H}^k}\\
&\lesssim\|J_{\epsilon}\mathbf{u}^{\epsilon}\|_{\mathbb{L}^{\infty}}^2\|J_{\epsilon}\mathbf{u}^{\epsilon}\|_{\mathbb{H}^k}^2\lesssim\|J_{\epsilon}\mathbf{u}^{\epsilon}\|_{\mathbb{H}^{2}}^2\|J_{\epsilon}\mathbf{u}^{\epsilon}\|_{\mathbb{H}^k}^2.
\end{split}
\end{equation}
Similarly, we have
\begin{equation}\label{lem4-5}
\begin{split}
&A_2\lesssim\|J_{\epsilon}\mathbf{u}^{\epsilon}\|_{\mathbb{L}^{\infty}}^4\|J_{\epsilon}\mathbf{u}^{\epsilon}\|_{\mathbb{H}^k}^2\lesssim\|J_{\epsilon}\mathbf{u}^{\epsilon}\|_{\mathbb{H}^{1}}^2\|J_{\epsilon}\mathbf{u}^{\epsilon}\|_{\mathbb{H}^{2}}^2\|J_{\epsilon}\mathbf{u}^{\epsilon}\|_{\mathbb{H}^k}^2.
\end{split}
\end{equation}
For $A_3$, by using the fact that
\begin{equation*}
\begin{split}
&\Lambda^{k}(J_{\epsilon}\mathbf{u}^{\epsilon}\times \nabla J_{\epsilon}\mathbf{u}^{\epsilon})\asymp\Lambda^{k}J_{\epsilon}\mathbf{u}^{\epsilon}\times \nabla J_{\epsilon}\mathbf{u}^{\epsilon}+J_{\epsilon}\mathbf{u}^{\epsilon}\times (\nabla \Lambda^{k}J_{\epsilon}\mathbf{u}^{\epsilon})+C_k\sum_{i=1}^{k-1}\Lambda^{i}J_{\epsilon}\mathbf{u}^{\epsilon}\times(\nabla\Lambda^{k-i}J_{\epsilon}\mathbf{u}^{\epsilon}),
\end{split}
\end{equation*}
the Sobolev embeddings $H^2(\mathbb{R}^3)\hookrightarrow L^{\infty}(\mathbb{R}^3)$ and $H^1(\mathbb{R}^3)\hookrightarrow L^p(\mathbb{R}^3),~p\in[2,6]$, and the estimate \eqref{lem4-0}, it follows that
\begin{equation}\label{lem4-6}
\begin{split}
&A_3\lesssim|(\Lambda^{k}J_{\epsilon}\mathbf{u}^{\epsilon}\times \nabla J_{\epsilon}\mathbf{u}^{\epsilon},\nabla\Lambda^{k} J_{\epsilon}\mathbf{u}^{\epsilon})_{\mathbb{L}^2}|+C_k\sum_{i=1}^{k-1}|(\Lambda^{i}J_{\epsilon}\mathbf{u}^{\epsilon}\times(\nabla\Lambda^{k-i}J_{\epsilon}\mathbf{u}^{\epsilon}),\nabla\Lambda^{k} J_{\epsilon}\mathbf{u}^{\epsilon})_{\mathbb{L}^2}|\\
&\lesssim\|\nabla J_{\epsilon}\mathbf{u}^{\epsilon}\|_{\mathbb{L}^{\infty}}\|\Lambda^{k}J_{\epsilon}\mathbf{u}^{\epsilon}\|_{\mathbb{L}^{2}}\|\nabla\Lambda^{k} J_{\epsilon}\mathbf{u}^{\epsilon}\|_{\mathbb{L}^{2}}\\
&+C_k\sum_{i=1}^{k-1}\|\Lambda^{i}J_{\epsilon}\mathbf{u}^{\epsilon}\|_{\mathbb{L}^3}\|\nabla\Lambda^{k-i}J_{\epsilon}\mathbf{u}^{\epsilon}\|_{\mathbb{L}^6}\|\nabla\Lambda^{k} J_{\epsilon}\mathbf{u}^{\epsilon}\|_{\mathbb{L}^2}\\
&\lesssim\varepsilon\|\nabla\Lambda^{k} J_{\epsilon}\mathbf{u}^{\epsilon}\|_{\mathbb{L}^{2}}^2+C_{\varepsilon}\|J_{\epsilon}\mathbf{u}^{\epsilon}\|_{\mathbb{H}^{3}}^2\|\Lambda^{k}J_{\epsilon}\mathbf{u}^{\epsilon}\|_{\mathbb{L}^{2}}^2+C_{\varepsilon,k}\|\nabla\Lambda^{k} J_{\epsilon}\mathbf{u}^{\epsilon}\|_{\mathbb{L}^{2}}^2\|\Lambda^{k}J_{\epsilon}\mathbf{u}^{\epsilon}\|_{\mathbb{L}^{2}}^2\\
&\lesssim\varepsilon\|\Delta J_{\epsilon}\mathbf{u}^{\epsilon}\|_{\mathbb{H}^k}^2+C_{\varepsilon,k}\| J_{\epsilon}\mathbf{u}^{\epsilon}\|_{\mathbb{H}^{k+1}}^2\|J_{\epsilon}\mathbf{u}^{\epsilon}\|_{\mathbb{H}^{k}}^2.
\end{split}
\end{equation}
Plugging \eqref{lem4-4}, \eqref{lem4-5} and \eqref{lem4-6} into \eqref{lem4-3} and choosing $\varepsilon$ small enough, we infer that
\begin{equation}\label{lem4-7}
\begin{split}
&\frac{\mathrm{d}}{\mathrm{d}t}\|\mathbf{u}^{\epsilon}\|_{\mathbb{H}^k}^2+\|\Delta J_{\epsilon}\mathbf{u}^{\epsilon}\|_{\mathbb{H}^k}^2\\
&\lesssim_k(1+\|J_{\epsilon}\mathbf{u}^{\epsilon}\|_{\mathbb{H}^{2}}^2+\|J_{\epsilon}\mathbf{u}^{\epsilon}\|_{\mathbb{H}^{1}}^2\|J_{\epsilon}\mathbf{u}^{\epsilon}\|_{\mathbb{H}^{2}}^2+\| J_{\epsilon}\mathbf{u}^{\epsilon}\|_{\mathbb{H}^{k+1}}^2)\| J_{\epsilon}\mathbf{u}^{\epsilon}\|_{\mathbb{H}^{k}}^2.
\end{split}
\end{equation}
Note that inequality \eqref{lem4-7} holds for any integer $k\geq3$. First, let $k=3$, and using the Gronwall lemma and the uniform estimate \eqref{lem3-1}, we obtain
\begin{equation*}
\begin{split}
&\|\mathbf{u}^{\epsilon}(t)\|_{\mathbb{H}^3}^{2}+\int_0^t\|J_{\epsilon}\mathbf{u}^{\epsilon}\|_{\mathbb{H}^5}^2\,\mathrm{d}s\lesssim_{\|\mathbf{u}_0\|_{\mathbb{H}^3},T'}1.
\end{split}
\end{equation*}
Furthermore, through a bootstrap argument, we increment $k$ step by step and repeatedly apply the Gronwall lemma to inequality \eqref{lem4-7}, thereby obtaining the estimate \eqref{lem4-11}.

Regarding \eqref{lem4-22}, it follows from \eqref{Mod-1} and Moser estimate that
\begin{equation*}
\begin{split}
&\|\mathbf{u}^{\epsilon}_t\|_{\mathbb{H}^{k-2}}^2\lesssim\|J_{\epsilon}\mathbf{u}\|_{\mathbb{H}^{k+2}}^2+\||J_{\epsilon}\mathbf{u}|^2J_{\epsilon}\mathbf{u}\|_{\mathbb{H}^{k}}^2+\|J_{\epsilon}\mathbf{u}\times\Delta J_{\epsilon}\mathbf{u}\|_{\mathbb{H}^{k-2}}^2\\
&\lesssim_k\|J_{\epsilon}\mathbf{u}\|_{\mathbb{H}^{k+2}}^2+\|J_{\epsilon}\mathbf{u}\|_{\mathbb{H}^{2}}^4\|J_{\epsilon}\mathbf{u}\|_{\mathbb{H}^{k}}^2+\|J_{\epsilon}\mathbf{u}\|_{\mathbb{H}^{k}}^4.
\end{split}
\end{equation*}
This combined with \eqref{lem4-11} means that
\begin{equation*}
\begin{split}
&\|\mathbf{u}^{\epsilon}_t\|_{L^2(0,T';\mathbb{H}^{k-2})}^2\lesssim_k\|J_{\epsilon}\mathbf{u}^{\epsilon}\|_{L^2(0,T';\mathbb{H}^{k+2})}^2+\|J_{\epsilon}\mathbf{u}^{\epsilon}\|_{L^{\infty}(0,T';\mathbb{H}^{2})}^4\|J_{\epsilon}\mathbf{u}^{\epsilon}\|_{L^2(0,T';\mathbb{H}^{k})}^2+\|J_{\epsilon}\mathbf{u}^{\epsilon}\|_{L^{\infty}(0,T';\mathbb{H}^{k})}^4\\
&\lesssim_{\|\mathbf{u}_0\|_{\mathbb{H}^k},T'}1.
\end{split}
\end{equation*}
The proof of lemma \ref{lem4} is thus completed.
\end{proof}

\section{Local existence of solutions}\label{sec4}
This section is devoted to obtaining local strong (smooth) solutions to the original system by taking the limit as $\epsilon \searrow 0$, utilizing the uniform estimates established in Section~\ref{sec3}. Moreover, we derive the corresponding blow-up criterion, which lays the groundwork for the subsequent proof of the global existence of solutions.
\subsection{Local strong solutions}
By applying the uniform boundedness estimates \eqref{lem3-1} and \eqref{lem3-2}, together with the Banach-Alaoglu theorem and the uniqueness of the limit, it follows that
\begin{equation}\label{4-1}
\begin{split}
&\mathbf{u}^{\epsilon}\rightarrow\mathbf{u}~\textrm{weakly-star in}~L^{\infty}(0,T';\mathbb{H}^2),\\
&\frac{\mathrm{d}}{\mathrm{d}t}J_{\epsilon}\mathbf{u}^{\epsilon}\rightarrow \frac{\mathrm{d}}{\mathrm{d}t}\mathbf{u}~\textrm{weakly in}~L^{2}(0,T';\mathbb{L}^2),\\
&J_{\epsilon}\mathbf{u}^{\epsilon}\rightarrow\mathbf{u}~\textrm{weakly in}~L^{2}(0,T';\mathbb{H}^4).
\end{split}
\end{equation}
Since $\mathbb{H}^4\hookrightarrow\hookrightarrow\mathbb{H}^3_{loc}\hookrightarrow\mathbb{H}^2_{loc}$, by using the Aubin-Lions lemma we see from \eqref{4-1} that
\begin{equation}\label{4-2}
\begin{split}
&J_{\epsilon}\mathbf{u}^{\epsilon}\rightarrow\mathbf{u}~\textrm{in}~L^{2}_w(0,T';\mathbb{H}^4)\cap L^{2}(0,T';\mathbb{H}^3_{loc}).
\end{split}
\end{equation}
Moreover, invoking the Fatou lemma, it follows that
\begin{equation}\label{4-3}
\begin{split}
&\|\mathbf{u}(t)\|_{\mathbb{H}^2}^{2}+\int_0^t\|\mathbf{u}\|_{\mathbb{H}^4}^2\,\mathrm{d}s\lesssim_{\|\mathbf{u}_0\|_{\mathbb{H}^2},T'}1,~\|\mathbf{u}_t\|_{L^2(0,T';\mathbb{L}^{2})}^2\lesssim_{\|\mathbf{u}_0\|_{\mathbb{H}^2},T'}1.
\end{split}
\end{equation}
Let
\begin{equation*}
\begin{split}
&F(\mathbf{u}):=-\Delta^2\mathbf{u}-\Delta\mathbf{u}+2(1-|\mathbf{u}|^2)\mathbf{u}+2\Delta(|\mathbf{u}|^2\mathbf{u})-\mathbf{u}\times\Delta \mathbf{u}.
\end{split}
\end{equation*}
We have the following convergence result.
\begin{proposition}\label{pro1} Under the same assumptions as Lemma \ref{lem3}, for any $t\in[0,T']$ and $\phi\in \mathbb{L}^2$, it follows that
\begin{equation}\label{4-4}
\begin{split}
&0=\lim_{\epsilon\searrow0}(\mathbf{u}^{\epsilon}(t),\phi)_{\mathbb{L}^2}-(\mathbf{u}^{\epsilon}_0,\phi)_{\mathbb{L}^2}-\int_0^t(F^{\epsilon}(\mathbf{u}^{\epsilon}(s)),\phi)_{\mathbb{L}^2}\,\mathrm{d}s\\
&=(\mathbf{u}(t),\phi)_{\mathbb{L}^2}-(\mathbf{u}_0,\phi)_{\mathbb{L}^2}-\int_0^t(F(\mathbf{u}(s)),\phi)_{\mathbb{L}^2}\,\mathrm{d}s,
\end{split}
\end{equation}
where $F^{\epsilon}(\mathbf{u}^{\epsilon})$ is defined by \eqref{def1}. Moreover, the solution $\mathbf{u}$ satisfies the following regularity
\begin{equation*}
\begin{split}
&\mathbf{u}\in \mathcal{C}([0,T'];\mathbb{H}^2)\cap L^2(0,T';\mathbb{H}^4).
\end{split}
\end{equation*}
\end{proposition}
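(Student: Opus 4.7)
The plan is to pass to the limit $\epsilon\searrow0$ term by term in the integrated form of \eqref{Mod-1} and then upgrade the resulting time regularity of $\mathbf{u}$ to continuity in $\mathbb{H}^2$. The $\mathcal{C}^1([0,T];\mathbb{H}^s)$ regularity from Lemma \ref{lem1} gives the pointwise-in-$t$ identity $(\mathbf{u}^{\epsilon}(t),\phi)_{\mathbb{L}^2}=(\mathbf{u}^{\epsilon}_0,\phi)_{\mathbb{L}^2}+\int_0^t(F^{\epsilon}(\mathbf{u}^{\epsilon}(s)),\phi)_{\mathbb{L}^2}\,\mathrm{d}s$ for every $\phi\in\mathbb{L}^2$, which is already the first line of \eqref{4-4}. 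The boundary pieces are handled immediately: $(\mathbf{u}^{\epsilon}_0,\phi)\to(\mathbf{u}_0,\phi)$ by Lemma \ref{lem0}(iv), and $(\mathbf{u}^{\epsilon}(t),\phi)\to(\mathbf{u}(t),\phi)$ for each fixed $t$ because the weak-$\star$ convergence in $L^\infty(0,T';\mathbb{H}^2)$ from \eqref{4-1} combined with the equicontinuity $\mathbf{u}^{\epsilon}(t)-\mathbf{u}^{\epsilon}(s)=\int_s^t\mathbf{u}^{\epsilon}_\tau\,\mathrm{d}\tau$, with $\mathbf{u}^{\epsilon}_t$ bounded in $L^2(0,T';\mathbb{L}^2)$ by \eqref{lem3-2}, forces pointwise-in-$t$ weak convergence in $\mathbb{L}^2$.

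For the integrand I would split $F^\epsilon$ into linear and nonlinear parts. The linear terms are routine: shifting one $J_\epsilon$ onto the test function via Lemma \ref{lem0}(i)(iii) gives $\int_0^t(J_\epsilon[\Delta^2J_\epsilon\mathbf{u}^\epsilon],\phi)\,\mathrm{d}s=\int_0^t(\Delta^2J_\epsilon\mathbf{u}^\epsilon,J_\epsilon\phi)\,\mathrm{d}s$, and since $J_\epsilon\phi\to\phi$ strongly in $\mathbb{L}^2$ while $\Delta^2J_\epsilon\mathbf{u}^\epsilon\rightharpoonup\Delta^2\mathbf{u}$ weakly in $L^2(0,T';\mathbb{L}^2)$ by \eqref{4-1}, the product converges to $\int_0^t(\Delta^2\mathbf{u},\phi)\,\mathrm{d}s$; the $-J_\epsilon[\Delta J_\epsilon\mathbf{u}^\epsilon]$ piece is identical.

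The principal obstacle is the three nonlinear contributions, where weak convergence is insufficient. My strategy is to prove \eqref{4-4} first for $\phi\in\mathcal{C}_0^\infty(\mathbb{R}^3;\mathbb{R}^3)$ and then extend to $\mathbb{L}^2$ by density, justified by the uniform bound $\|F^\epsilon(\mathbf{u}^\epsilon)\|_{L^2(0,T';\mathbb{L}^2)}\lesssim 1$ that follows from Lemma \ref{lem3} along the same lines as \eqref{lem3-2}. For $\phi$ supported in a bounded set $\mathcal{O}$, I commute the outer $J_\epsilon$ onto $\phi$ via Lemma \ref{lem0}(iii) and then exploit the local strong convergence $J_\epsilon\mathbf{u}^\epsilon\to\mathbf{u}$ in $L^2(0,T';\mathbb{H}^3_{loc})$ from \eqref{4-2}; together with the Sobolev embedding $H^3(\mathcal{O})\hookrightarrow\mathcal{C}^1(\overline{\mathcal{O}})$ and the uniform $L^\infty(0,T';\mathbb{H}^2)$ bound, this yields (along a subsequence) almost-everywhere convergence of $J_\epsilon\mathbf{u}^\epsilon$, $\nabla J_\epsilon\mathbf{u}^\epsilon$ and $\Delta J_\epsilon\mathbf{u}^\epsilon$ on $[0,T']\times\mathcal{O}$. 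Expanding $\Delta(|J_\epsilon\mathbf{u}^\epsilon|^2J_\epsilon\mathbf{u}^\epsilon)$ as a polynomial in these quantities, the uniform $L^2(0,T';\mathbb{L}^2(\mathcal{O}))$ bounds dominate the pointwise convergence and Vitali's theorem delivers $L^1$-convergence against the $\mathcal{O}$-supported test function. The cubic term $(1-|\mathbf{u}|^2)\mathbf{u}$ and the gyromagnetic term $\mathbf{u}\times\Delta\mathbf{u}$ are treated analogously, the latter using the a.e.\ convergence of $\Delta J_\epsilon\mathbf{u}^\epsilon$.

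Finally, the continuity $\mathbf{u}\in\mathcal{C}([0,T'];\mathbb{H}^2)$ follows from the classical Lions--Magenes trace theorem applied to $\mathbf{u}\in L^2(0,T';\mathbb{H}^4)$ with $\mathbf{u}_t\in L^2(0,T';\mathbb{L}^2)$, combined with the interpolation identity $[\mathbb{H}^4,\mathbb{L}^2]_{1/2}=\mathbb{H}^2$; together with the bounds \eqref{4-3} this delivers the claimed regularity and, in conjunction with the limit identity, the full statement of the proposition.
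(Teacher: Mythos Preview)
Your proposal is correct and follows essentially the same architecture as the paper: localize to compactly supported test functions, use the strong local convergence $J_\epsilon\mathbf{u}^\epsilon\to\mathbf{u}$ in $L^2(0,T';\mathbb{H}^3_{loc})$ from Aubin--Lions for the nonlinear terms, handle the linear terms via weak convergence after shifting one $J_\epsilon$ onto $\phi$, extend to general $\phi\in\mathbb{L}^2$ by density and the uniform $L^2(0,T';\mathbb{L}^2)$ bound on $F^\epsilon$, and conclude with Lions--Magenes for $\mathcal{C}([0,T'];\mathbb{H}^2)$. The only minor technical variation is that the paper bounds the localized nonlinear differences directly via the Banach algebra property of $H^2(\mathcal{O}_d)$ (controlling everything by $\|J_\epsilon\mathbf{u}^\epsilon-\mathbf{u}\|_{\mathbb{H}^2(\mathcal{O}_d)}$), whereas you invoke a.e.\ convergence plus Vitali; both are valid and amount to the same idea.
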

\begin{proof}[\emph{\textbf{Proof}}] By using the fact that $\mathbf{u}^{\epsilon}\rightarrow\mathbf{u}~\textrm{weakly-star in}~L^{\infty}(0,T';\mathbb{H}^2)$ and Lemma \ref{lem0}, it is obvious that
\begin{equation*}
\begin{split}
&\lim_{\epsilon\searrow0}(\mathbf{u}^{\epsilon}(t),\phi)_{\mathbb{L}^2}=(\mathbf{u}(t),\phi)_{\mathbb{L}^2}~\textrm{and}~\lim_{\epsilon\searrow0}(\mathbf{u}^{\epsilon}_0,\phi)_{\mathbb{L}^2}=(\mathbf{u}_0,\phi)_{\mathbb{L}^2}.
\end{split}
\end{equation*}
Since $J_{\epsilon}\mathbf{u}^{\epsilon}\rightarrow\mathbf{u}~\textrm{weakly in}~L^{2}(0,T';\mathbb{H}^4)$, it follows from Lemma \ref{lem0} and \eqref{lem3-1} that
\begin{equation*}
\begin{split}
&\left|\int_0^t(J_{\epsilon}\Delta^2J_{\epsilon}\mathbf{u}^{\epsilon}(s),\phi)_{\mathbb{L}^2}-(\Delta^2\mathbf{u}(s),\phi)_{\mathbb{L}^2}\,\mathrm{d}s\right|\\
&\leq\left|\int_0^t(\Delta^2J_{\epsilon}\mathbf{u}^{\epsilon}(s),J_{\epsilon}\phi-\phi)_{\mathbb{L}^2}\,\mathrm{d}s\right|+\left|\int_0^t(\Delta^2J_{\epsilon}\mathbf{u}^{\epsilon}(s),\phi)_{\mathbb{L}^2}-(\Delta^2\mathbf{u}(s),\phi)_{\mathbb{L}^2}\,\mathrm{d}s\right|\\
&\leq C_{T'}\|J_{\epsilon}\mathbf{u}^{\epsilon}\|_{L^2(0,T';\mathbb{H}^4)}\|J_{\epsilon}\phi-\phi\|_{\mathbb{L}^2}+\left|\int_0^t(\Delta^2J_{\epsilon}\mathbf{u}^{\epsilon}(s),\phi)_{\mathbb{L}^2}-(\Delta^2\mathbf{u}(s),\phi)_{\mathbb{L}^2}\,\mathrm{d}s\right|\\
&\rightarrow0~\textrm{as}~\epsilon\searrow0.
\end{split}
\end{equation*}
For the remaining terms, we shall focus on proving the following two convergence results
\begin{align}
&\lim_{\epsilon\searrow0}\int_0^t(J_{\epsilon}\Delta(|J_{\epsilon}\mathbf{u}^{\epsilon}|^2J_{\epsilon}\mathbf{u}^{\epsilon}),\phi)_{\mathbb{L}^2}\,\mathrm{d}s=\int_0^t(\Delta(|\mathbf{u}|^2\mathbf{u}),\phi)_{\mathbb{L}^2}\,\mathrm{d}s,\label{4-5}\\
&\lim_{\epsilon\searrow0}\int_0^t(J_{\epsilon}(J_{\epsilon}\mathbf{u}^{\epsilon}\times \Delta J_{\epsilon}\mathbf{u}^{\epsilon}),\phi)_{\mathbb{L}^2}\,\mathrm{d}s=\int_0^t(\mathbf{u}\times\Delta \mathbf{u},\phi)_{\mathbb{L}^2}\,\mathrm{d}s,\label{4-6}
\end{align}
and the proofs for the other terms are similar and therefore omitted. For every $\phi\in L^2(\mathbb{R}^3)$, there exists $\phi_{h}\in \mathcal{C}_0^{\infty}(\mathbb{R}^3)$ such that $\|\phi-\phi_h\|_{L^2}\leq h$ for $h>0$. Moreover, for any $\phi_h\in \mathcal{C}_0^{\infty}(\mathbb{R}^3)$, there exists $d>0$ such that supp$~\phi_h$ is a compact subset of $\mathcal{O}_d$. Thus by using the triangle inequality and  noting that $H^2(\mathbb{R}^3)$ is a Banach algebra, it follows that
\begin{equation}\label{4-7}
\begin{split}
&\left|(J_{\epsilon}\Delta(|J_{\epsilon}\mathbf{u}^{\epsilon}|^2J_{\epsilon}\mathbf{u}^{\epsilon}),\phi)_{\mathbb{L}^2}-(\Delta(|\mathbf{u}|^2\mathbf{u}),\phi)_{\mathbb{L}^2}\right|\\
&\leq\left|(\Delta(|J_{\epsilon}\mathbf{u}^{\epsilon}|^2J_{\epsilon}\mathbf{u}^{\epsilon}),J_{\epsilon}\phi-\phi)_{\mathbb{L}^2}\right|+\left|(\Delta(|J_{\epsilon}\mathbf{u}^{\epsilon}|^2J_{\epsilon}\mathbf{u}^{\epsilon}-|\mathbf{u}|^2\mathbf{u}),\phi-\phi_{h})_{\mathbb{L}^2}\right|\\
&+\left|(\Delta(|J_{\epsilon}\mathbf{u}^{\epsilon}|^2J_{\epsilon}\mathbf{u}^{\epsilon}-|\mathbf{u}|^2\mathbf{u}),\phi_{h})_{\mathbb{L}^2}\right|\\
&\lesssim\|J_{\epsilon}\mathbf{u}^{\epsilon}\|_{\mathbb{H}^2}^3\|J_{\epsilon}\phi-\phi\|_{\mathbb{L}^2}+(\|J_{\epsilon}\mathbf{u}^{\epsilon}\|_{\mathbb{H}^2}^3+\|\mathbf{u}\|_{\mathbb{H}^2}^3)\|\phi-\phi_h\|_{\mathbb{L}^2}\\
&+(\|J_{\epsilon}\mathbf{u}^{\epsilon}\|_{\mathbb{H}^2(\mathcal{O}_d)}^2+\|\mathbf{u}\|_{\mathbb{H}^2(\mathcal{O}_d)}^2)\|J_{\epsilon}\mathbf{u}^{\epsilon}-\mathbf{u}\|_{\mathbb{H}^2(\mathcal{O}_d)}\|\phi_h\|_{\mathbb{L}^2(\mathcal{O}_d)}.
\end{split}
\end{equation}
Thus by using the estimates \eqref{lem3-1} and \eqref{4-3} and applying the convergence result \eqref{4-2}, we see from \eqref{4-7} that
\begin{equation*}
\begin{split}
\limsup_{\epsilon\searrow0}\left|\int_0^t(J_{\epsilon}\Delta(|J_{\epsilon}\mathbf{u}^{\epsilon}|^2J_{\epsilon}\mathbf{u}^{\epsilon}),\phi)_{\mathbb{L}^2}-(\Delta(|\mathbf{u}|^2\mathbf{u}),\phi)_{\mathbb{L}^2}\,\mathrm{d}s\right|\leq C_{\|\mathbf{u}_0\|_{\mathbb{H}^2},T'}h,
\end{split}
\end{equation*}
which combined with the arbitrariness of $h$ implies that \eqref{4-5} holds. Similarly, it follows that
\begin{equation*}
\begin{split}
&\left|(J_{\epsilon}(J_{\epsilon}\mathbf{u}^{\epsilon}\times \Delta J_{\epsilon}\mathbf{u}^{\epsilon}),\phi)_{\mathbb{L}^2}-(\mathbf{u}\times\Delta \mathbf{u},\phi)_{\mathbb{L}^2}\right|\\
&\leq\left|(J_{\epsilon}\mathbf{u}^{\epsilon}\times \Delta J_{\epsilon}\mathbf{u}^{\epsilon},J_{\epsilon}\phi-\phi)_{\mathbb{L}^2}\right|+\left|(J_{\epsilon}\mathbf{u}^{\epsilon}\times \Delta J_{\epsilon}\mathbf{u}^{\epsilon}-\mathbf{u}\times\Delta \mathbf{u},\phi-\phi_{h})_{\mathbb{L}^2}\right|\\
&+\left|(J_{\epsilon}\mathbf{u}^{\epsilon}\times \Delta J_{\epsilon}\mathbf{u}^{\epsilon}-\mathbf{u}\times\Delta \mathbf{u},\phi_{h})_{\mathbb{L}^2}\right|\\
&\lesssim\|J_{\epsilon}\mathbf{u}^{\epsilon}\|_{\mathbb{H}^2}^2\|J_{\epsilon}\phi-\phi\|_{\mathbb{L}^2}+(\|J_{\epsilon}\mathbf{u}^{\epsilon}\|_{\mathbb{H}^2}^2+\|\mathbf{u}\|_{\mathbb{H}^2}^2)\|\phi-\phi_h\|_{\mathbb{L}^2}\\
&+(\|J_{\epsilon}\mathbf{u}^{\epsilon}\|_{\mathbb{H}^2(\mathcal{O}_d)}+\|\mathbf{u}\|_{\mathbb{H}^2(\mathcal{O}_d)})\|J_{\epsilon}\mathbf{u}^{\epsilon}-\mathbf{u}\|_{\mathbb{H}^2(\mathcal{O}_d)}\|\phi_h\|_{\mathbb{L}^2(\mathcal{O}_d)}.
\end{split}
\end{equation*}
By taking the limit, we have
\begin{equation*}
\begin{split}
\limsup_{\epsilon\searrow0}\left|\int_0^t(J_{\epsilon}(J_{\epsilon}\mathbf{u}^{\epsilon}\times \Delta J_{\epsilon}\mathbf{u}^{\epsilon}),\phi)_{\mathbb{L}^2}-(\mathbf{u}\times\Delta \mathbf{u},\phi)_{\mathbb{L}^2}\,\mathrm{d}s\right|\leq C_{\|\mathbf{u}_0\|_{\mathbb{H}^2},T'}h,
\end{split}
\end{equation*}
which implies that \eqref{4-6} holds. Combining the above convergence results, we finally obtain that the solution $\mathbf{u}$ satisfies \eqref{4-4}. Moreover, according to \eqref{4-1}, we see that $\mathbf{u}\in L^{2}(0,T';\mathbb{H}^4)$ and $\mathbf{u}_t\in L^{2}(0,T';\mathbb{L}^2)$. Thus by using Lions-Magenes lemma \cite{lions1963problemes}, it follows that $\mathbf{u}\in\mathcal{C}([0,T'];[\mathbb{L}^2,\mathbb{H}^4]_{\frac{1}{2}})=\mathcal{C}([0,T'];\mathbb{H}^2)$. The proof of Proposition \ref{pro1} is thus completed.
\end{proof}

Based on the construction in Lemma \ref{lem3-1}, we know that the existence time $T'$ of the solution $\mathbf{u}$ is controlled by
\begin{equation*}
\begin{split}
&T'<\frac{C}{\left(\|\nabla\mathbf{u}_0\|_{\mathbb{L}^2}^2+C_{\|\mathbf{u}_0\|_{\mathbb{L}^2},T}\right)^4}.
\end{split}
\end{equation*}
At time $T'$, choose $\mathbf{u}(T',x)$ as initial data for a new solution and repeat the process by continuing the solution on a time interval $[T',T'']$ for which
\begin{equation*}
\begin{split}
&T''<\frac{C}{\left(\|\nabla\mathbf{u}(T')\|_{\mathbb{L}^2}^2+C_{\|\mathbf{u}(T')\|_{\mathbb{L}^2},T}\right)^4}.
\end{split}
\end{equation*}
Obviously the process can be continued either for any time $T>0$ or until $\|\nabla\mathbf{u}(t,\cdot)\|_{\mathbb{L}^2}$ becomes infinite. Therefore, we have the following blow-up criterion.
\begin{corollary}\label{cor1}
Under the same assumptions as Lemma \ref{lem3}, there exists a maximal time of existence $T_{M}>0$ and a strong solution $\mathbf{u}\in \mathcal{C}([0,T_{M});\mathbb{H}^2)\cap L^2(0,T_M;\mathbb{H}^4)$ to the system \eqref{sys1}. Moreover, for any given $T>0$, if
$
\sup_{t\in[0,T]}\|\nabla \mathbf{u}(t)\|_{\mathbb{L}^2} < \infty,
$
then necessarily $T_M \geq T$ (the solution can be extended to $[0,T]$). Equivalently, if $T_M\leq T$ then necessarily
$
\lim_{t\nearrow T_M}\|\nabla \mathbf{u}(t)\|_{\mathbb{L}^2}=\infty.
$
\end{corollary}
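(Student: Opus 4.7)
The plan is a standard continuation argument, exploiting the fact that the local existence time furnished by Proposition~\ref{pro1} depends only on $\|\nabla\mathbf{u}_0\|_{\mathbb{L}^2}$ and $\|\mathbf{u}_0\|_{\mathbb{L}^2}$, and not on any higher-order norm of the initial datum.

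First I would define $T_M$ as the supremum of all $T'>0$ for which \eqref{sys1} admits a strong solution of class $\mathcal{C}([0,T'];\mathbb{H}^2)\cap L^2(0,T';\mathbb{H}^4)$ issuing from $\mathbf{u}_0$. Proposition~\ref{pro1} shows $T_M>0$, and concatenating on overlapping intervals via uniqueness of strong solutions (to be proved in Section~\ref{sec5}) produces a single maximal solution $\mathbf{u}\in\mathcal{C}([0,T_M);\mathbb{H}^2)\cap L^2_{\mathrm{loc}}([0,T_M);\mathbb{H}^4)$.

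For the blow-up criterion I would argue by contradiction: assume $T_M\leq T$ but $K:=\sup_{t\in[0,T_M)}\|\nabla\mathbf{u}(t)\|_{\mathbb{L}^2}<\infty$. Testing equation \eqref{sys1} against $\mathbf{u}$ itself (legitimate by the regularity $\mathbf{u}_t\in L^2_{\mathrm{loc}}([0,T_M);\mathbb{L}^2)$ together with the Lions-Magenes lemma) and proceeding as in \eqref{lem1-2}--\eqref{lem1-3} yields $\sup_{t\in[0,T_M)}\|\mathbf{u}(t)\|_{\mathbb{L}^2}\leq M(\|\mathbf{u}_0\|_{\mathbb{L}^2},T)$. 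Now, for any $t_0\in[0,T_M)$, the continuity $\mathbf{u}\in\mathcal{C}([0,T_M);\mathbb{H}^2)$ gives $\mathbf{u}(t_0,\cdot)\in\mathbb{H}^2$, so Proposition~\ref{pro1} applied at time $t_0$ produces a strong solution on $[t_0,t_0+\tau]$, and the quantitative formula displayed just above the statement of Corollary~\ref{cor1} guarantees
\begin{equation*}
\tau\ \gtrsim\ \frac{1}{\bigl(\|\nabla\mathbf{u}(t_0)\|_{\mathbb{L}^2}^{2}+C_{\|\mathbf{u}(t_0)\|_{\mathbb{L}^2},T}\bigr)^{4}}\ \gtrsim\ \frac{1}{(K^{2}+C_{M,T})^{4}}=:\tau_{0}>0,
\end{equation*}
a lower bound that is independent of $t_0$.

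Choosing $t_0$ so close to $T_M$ that $t_0+\tau_0>T_M$ and gluing the new solution on $[t_0,t_0+\tau_0]$ with the old one on $[0,t_0]$ via uniqueness, I obtain a strong solution on $[0,t_0+\tau_0]$ in the prescribed regularity class, contradicting the maximality of $T_M$. Hence $T_M\geq T$, which is the contrapositive of the stated blow-up alternative. The step requiring the most care is verifying that the constants hidden in Lemma~\ref{lem3} genuinely depend only on $\|\nabla\mathbf{u}_0\|_{\mathbb{L}^2}$ and $\|\mathbf{u}_0\|_{\mathbb{L}^2}$ (so that $\tau_0$ is uniform in $t_0$), and that the glued function inherits the full regularity class across the splice point; both are essentially bookkeeping but must be checked explicitly since the higher-order norm $\|\mathbf{u}(t_0)\|_{\mathbb{H}^2}$ is a priori not controlled as $t_0\nearrow T_M$.
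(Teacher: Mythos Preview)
Your argument is correct and follows essentially the same continuation strategy as the paper, which simply observes (in the paragraph immediately preceding Corollary~\ref{cor1}) that the local existence time is controlled by $\|\nabla\mathbf{u}(t_0)\|_{\mathbb{L}^2}$ and $\|\mathbf{u}(t_0)\|_{\mathbb{L}^2}$, so the solution can be iteratively extended until $\|\nabla\mathbf{u}(t)\|_{\mathbb{L}^2}$ blows up. Your version is in fact more carefully articulated than the paper's sketch, explicitly flagging the need for uniqueness (proved later in Proposition~\ref{pro2}) and the uniform-in-$t_0$ lower bound on the restart time.
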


\subsection{Local smooth solutions}
Based on the uniform bound provided by Lemma \ref{lem3-2}, we shall prove that the sequence $\mathbf{u}^{\epsilon}$ is a Cauchy sequence in $\mathcal{C}([0,T'];\mathbb{L}^2)$ when $\mathbf{u}_0\in\mathbb{H}^k,~k\geq6$. Specifically, we have the following result.
\begin{lemma}\label{lem4-1} Let $\mathbf{u}_0\in\mathbb{H}^k$ where the integer $k\geq6$. The family $\mathbf{u}^{\epsilon}$ forms a Cauchy sequence in $\mathcal{C}([0,T'];\mathbb{L}^2)$. In particular, for all $\epsilon$ and $\epsilon'>0$, we have
\begin{equation*}
\begin{split}
&\sup_{t\in[0,T']}\|\mathbf{u}^{\epsilon}-\mathbf{u}^{\epsilon'}\|_{\mathbb{L}^2}\lesssim_{\|\mathbf{u}_0\|_{\mathbb{H}^k},T'}\max\{\epsilon,\epsilon'\}.
\end{split}
\end{equation*}
\end{lemma}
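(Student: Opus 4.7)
The plan is to derive a differential inequality of the form
\begin{equation*}
\frac{d}{dt}\|w\|_{\mathbb{L}^2}^2 + \|\Delta J_\epsilon w\|_{\mathbb{L}^2}^2 \lesssim_{\|\mathbf{u}_0\|_{\mathbb{H}^k},T'} \|w\|_{\mathbb{L}^2}^2 + (\max\{\epsilon,\epsilon'\})^2
\end{equation*}
for $w := \mathbf{u}^\epsilon - \mathbf{u}^{\epsilon'}$, and then invoke Gronwall together with the initial bound $\|w(0)\|_{\mathbb{L}^2} = \|(J_\epsilon - J_{\epsilon'})\mathbf{u}_0\|_{\mathbb{L}^2} \lesssim \max\{\epsilon,\epsilon'\}\|\mathbf{u}_0\|_{\mathbb{H}^1}$ supplied by Lemma \ref{lem0}(iv). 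Subtracting the two copies of \eqref{Mod-1} gives $\partial_t w = F^\epsilon(\mathbf{u}^\epsilon) - F^{\epsilon'}(\mathbf{u}^{\epsilon'})$, and pairing with $w$ in $\mathbb{L}^2$ reduces the problem to a term-by-term estimate of this difference.

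For each of the five constituents of $F^\epsilon(\mathbf{u}^\epsilon) - F^{\epsilon'}(\mathbf{u}^{\epsilon'})$ I will use the telescoping decomposition
\begin{equation*}
J_\epsilon\Phi(J_\epsilon \mathbf{u}^\epsilon) - J_{\epsilon'}\Phi(J_{\epsilon'}\mathbf{u}^{\epsilon'}) = J_\epsilon[\Phi(J_\epsilon \mathbf{u}^\epsilon) - \Phi(J_\epsilon \mathbf{u}^{\epsilon'})] + J_\epsilon[\Phi(J_\epsilon \mathbf{u}^{\epsilon'}) - \Phi(J_{\epsilon'}\mathbf{u}^{\epsilon'})] + (J_\epsilon - J_{\epsilon'})\Phi(J_{\epsilon'}\mathbf{u}^{\epsilon'}),
\end{equation*}
where $\Phi$ is the relevant linear or nonlinear operator. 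The first, Lipschitz-in-$w$ piece supplies the dissipation $\|\Delta J_\epsilon w\|_{\mathbb{L}^2}^2$ from the biharmonic term after using the self-adjointness of $J_\epsilon$ and integrating by parts, plus forcings of the form $C(1 + \|\mathbf{u}^\epsilon\|_{L^\infty}^2 + \|\mathbf{u}^{\epsilon'}\|_{L^\infty}^2)\|w\|_{\mathbb{L}^2}^2$ from the $(1 - |\mathbf{u}|^2)\mathbf{u}$ and $\Delta(|\mathbf{u}|^2 \mathbf{u})$ terms, after factoring $|a|^2 a - |b|^2 b = |a|^2(a-b) + (|a|^2 - |b|^2)b$ and using $\mathbb{H}^2 \hookrightarrow \mathbb{L}^\infty$; the anti-diffusive $\|\nabla J_\epsilon w\|_{\mathbb{L}^2}^2$ coming from $-J_\epsilon \Delta J_\epsilon$ is absorbed into the biharmonic dissipation via interpolation as in \eqref{lem4-0}. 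The two remaining mollifier-defect pieces are bounded directly by Lemma \ref{lem0}(iv), which produces a $\max\{\epsilon,\epsilon'\}$ factor at the cost of one extra derivative on $\Phi(J_{\epsilon'}\mathbf{u}^{\epsilon'})$; applying the Moser product inequality then reduces each such $\mathbb{H}^1$ norm to a polynomial expression in $\|\mathbf{u}^{\epsilon'}\|_{\mathbb{H}^5}$, which is uniformly bounded in time through \eqref{lem4-11} under the hypothesis $k \geq 6$ (the defect $\|(J_\epsilon - J_{\epsilon'})\Delta^2 J_{\epsilon'}\mathbf{u}^{\epsilon'}\|_{\mathbb{L}^2} \lesssim \max\{\epsilon,\epsilon'\}\|\mathbf{u}^{\epsilon'}\|_{\mathbb{H}^5}$ is the term that forces this top-order regularity).

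The main obstacle is the cross-product term $J_\epsilon[J_\epsilon \mathbf{u}^\epsilon \times \Delta J_\epsilon \mathbf{u}^\epsilon] - J_{\epsilon'}[J_{\epsilon'}\mathbf{u}^{\epsilon'} \times \Delta J_{\epsilon'}\mathbf{u}^{\epsilon'}]$, because after the Lipschitz split this piece still carries a full Laplacian acting on $w$, which the $\mathbb{L}^2$-energy identity alone does not control. I will handle it by two algebraic cancellations. First, writing $J_\epsilon \mathbf{u}^\epsilon - J_\epsilon \mathbf{u}^{\epsilon'} = J_\epsilon w$, the contribution $(J_\epsilon w \times \Delta J_\epsilon \mathbf{u}^\epsilon, J_\epsilon w)_{\mathbb{L}^2}$ vanishes pointwise by $(a \times b) \cdot a = 0$. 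Second, for the remaining piece $(J_\epsilon \mathbf{u}^{\epsilon'} \times \Delta J_\epsilon w, J_\epsilon w)_{\mathbb{L}^2}$ I integrate by parts once and use the pointwise identity $(a \times \partial_i b) \cdot \partial_i b = 0$ to kill the formally dangerous $\|\nabla J_\epsilon w\|^2$-sized contribution, leaving $|(\nabla J_\epsilon \mathbf{u}^{\epsilon'} \times \nabla J_\epsilon w, J_\epsilon w)_{\mathbb{L}^2}| \lesssim \|\nabla \mathbf{u}^{\epsilon'}\|_{L^\infty}\|\nabla J_\epsilon w\|_{\mathbb{L}^2}\|w\|_{\mathbb{L}^2}$, which is absorbed into the biharmonic dissipation via \eqref{lem4-0} and Young's inequality. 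Collecting all contributions yields the desired scalar ODE, and Gronwall's lemma then gives $\sup_{t \in [0,T']}\|w(t)\|_{\mathbb{L}^2}^2 \lesssim_{\|\mathbf{u}_0\|_{\mathbb{H}^k},T'} (\max\{\epsilon,\epsilon'\})^2$, completing the proof.
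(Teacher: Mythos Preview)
Your proposal is correct and follows the same overall strategy as the paper: derive a Gronwall-type differential inequality for $\|\mathbf{u}^\epsilon-\mathbf{u}^{\epsilon'}\|_{\mathbb{L}^2}^2$, with the biharmonic dissipation absorbing the bad lower-order pieces and the mollifier defects contributing the $\max\{\epsilon,\epsilon'\}$ source.

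There are two tactical differences worth noting. First, your telescoping peels off the inner mollifier difference first, so the good term comes out as $-\|\Delta J_\epsilon w\|_{\mathbb{L}^2}^2$; the paper instead writes, e.g., $J_\epsilon^2\Delta^2\mathbf{u}^\epsilon - J_{\epsilon'}^2\Delta^2\mathbf{u}^{\epsilon'} = (J_\epsilon^2-J_{\epsilon'}^2)\Delta^2\mathbf{u}^\epsilon + J_{\epsilon'}^2\Delta^2 w$, obtaining $-\|J_{\epsilon'}\Delta w\|_{\mathbb{L}^2}^2$. This is cosmetic. Second, and more interestingly, your treatment of the cross-product term exploits the pointwise identities $(a\times b)\cdot a=0$ and $(a\times\partial_i b)\cdot\partial_i b=0$ to kill the dangerous contribution outright, leaving only a first-order remainder $\|\nabla\mathbf{u}^{\epsilon'}\|_{\mathbb{L}^\infty}\|\nabla J_\epsilon w\|_{\mathbb{L}^2}\|w\|_{\mathbb{L}^2}$. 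The paper does not use these cancellations: it simply bounds the term carrying $J_{\epsilon'}\Delta w$ by $\|\mathbf{u}^\epsilon\|_{\mathbb{L}^\infty}\|J_{\epsilon'}\Delta w\|_{\mathbb{L}^2}\|w\|_{\mathbb{L}^2}$ and absorbs $\varepsilon\|J_{\epsilon'}\Delta w\|_{\mathbb{L}^2}^2$ into the biharmonic dissipation via Young's inequality (this is the $\varepsilon\|J_{\epsilon'}\Delta(\mathbf{u}^\epsilon-\mathbf{u}^{\epsilon'})\|_{\mathbb{L}^2}^2$ appearing in the $K_6$ estimate). Your route is slightly cleaner and would survive even if the dissipation were weaker, but in the presence of the full biharmonic term either argument closes.
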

\begin{proof}[\emph{\textbf{Proof}}] By equation \eqref{Mod-1}, it follows that
\begin{equation}\label{4-8}
\begin{split}
&\frac{1}{2}\frac{\mathrm{d}}{\mathrm{d}t}\|\mathbf{u}^{\epsilon}-\mathbf{u}^{\epsilon'}\|_{\mathbb{L}^2}^2=-(J_{\epsilon}^2\Delta^2\mathbf{u}^{\epsilon}-J_{\epsilon'}^2\Delta^2\mathbf{u}^{\epsilon'},\mathbf{u}^{\epsilon}-\mathbf{u}^{\epsilon'})_{\mathbb{L}^2}\\
&-(J_{\epsilon}^2\Delta\mathbf{u}^{\epsilon}-J_{\epsilon'}^2\Delta\mathbf{u}^{\epsilon'},\mathbf{u}^{\epsilon}-\mathbf{u}^{\epsilon'})_{\mathbb{L}^2}+2(J_{\epsilon}^2\mathbf{u}^{\epsilon}-J_{\epsilon'}^2\mathbf{u}^{\epsilon'},\mathbf{u}^{\epsilon}-\mathbf{u}^{\epsilon'})_{\mathbb{L}^2}\\
&-2(J_{\epsilon}(|J_{\epsilon}\mathbf{u}^{\epsilon}|^2J_{\epsilon}\mathbf{u}^{\epsilon})-J_{\epsilon'}(|J_{\epsilon'}\mathbf{u}^{\epsilon'}|^2J_{\epsilon'}\mathbf{u}^{\epsilon'}),\mathbf{u}^{\epsilon}-\mathbf{u}^{\epsilon'})_{\mathbb{L}^2}\\
&+2(J_{\epsilon}\Delta(|J_{\epsilon}\mathbf{u}^{\epsilon}|^2J_{\epsilon}\mathbf{u}^{\epsilon})-J_{\epsilon'}\Delta(|J_{\epsilon'}\mathbf{u}^{\epsilon'}|^2J_{\epsilon'}\mathbf{u}^{\epsilon'}),\mathbf{u}^{\epsilon}-\mathbf{u}^{\epsilon'})_{\mathbb{L}^2}\\
&-(J_{\epsilon}(J_{\epsilon}\mathbf{u}^{\epsilon}\times \Delta J_{\epsilon}\mathbf{u}^{\epsilon})-J_{\epsilon'}(J_{\epsilon'}\mathbf{u}^{\epsilon'}\times \Delta J_{\epsilon'}\mathbf{u}^{\epsilon'}),\mathbf{u}^{\epsilon}-\mathbf{u}^{\epsilon'})_{\mathbb{L}^2}\\
&:=K_1+\cdots+K_6.
\end{split}
\end{equation}
By integrating by parts and using Lemma \ref{lem0}, it follows that
\begin{equation*}
\begin{split}
&K_1=-((J_{\epsilon}^2-J_{\epsilon'}^2)\Delta^2\mathbf{u}^{\epsilon},\mathbf{u}^{\epsilon}-\mathbf{u}^{\epsilon'})_{\mathbb{L}^2}-\|J_{\epsilon'}\Delta(\mathbf{u}^{\epsilon}-\mathbf{u}^{\epsilon'})\|_{\mathbb{L}^2}^2\\
&\leq\|(J_{\epsilon}^2-J_{\epsilon'}^2)\Delta^2\mathbf{u}^{\epsilon}\|_{\mathbb{L}^2}\|\mathbf{u}^{\epsilon}-\mathbf{u}^{\epsilon'}\|_{\mathbb{L}^2}-\|J_{\epsilon'}\Delta(\mathbf{u}^{\epsilon}-\mathbf{u}^{\epsilon'})\|_{\mathbb{L}^2}^2\\
&\lesssim(\|J_{\epsilon}\Delta^2\mathbf{u}^{\epsilon}-\Delta^2\mathbf{u}^{\epsilon}\|_{\mathbb{L}^2}+\|\Delta^2\mathbf{u}^{\epsilon}-J_{\epsilon'}\Delta^2\mathbf{u}^{\epsilon}\|_{\mathbb{L}^2})\|\mathbf{u}^{\epsilon}-\mathbf{u}^{\epsilon'}\|_{\mathbb{L}^2}-\|J_{\epsilon'}\Delta(\mathbf{u}^{\epsilon}-\mathbf{u}^{\epsilon'})\|_{\mathbb{L}^2}^2\\
&\lesssim\max\{\epsilon,\epsilon'\}\|\mathbf{u}^{\epsilon}\|_{\mathbb{H}^6}\|\mathbf{u}^{\epsilon}-\mathbf{u}^{\epsilon'}\|_{\mathbb{L}^2}-\|J_{\epsilon'}\Delta(\mathbf{u}^{\epsilon}-\mathbf{u}^{\epsilon'})\|_{\mathbb{L}^2}^2.
\end{split}
\end{equation*}
Similarly, by using Young's inequality, it follows that
\begin{equation*}
\begin{split}
&K_2+K_3=-((J_{\epsilon}^2-J_{\epsilon'}^2)\Delta\mathbf{u}^{\epsilon},\mathbf{u}^{\epsilon}-\mathbf{u}^{\epsilon'})_{\mathbb{L}^2}-(J_{\epsilon'}^2\Delta(\mathbf{u}^{\epsilon}-\mathbf{u}^{\epsilon'}),\mathbf{u}^{\epsilon}-\mathbf{u}^{\epsilon'})_{\mathbb{L}^2}\\
&+((J_{\epsilon}^2-J_{\epsilon'}^2)\mathbf{u}^{\epsilon},\mathbf{u}^{\epsilon}-\mathbf{u}^{\epsilon'})_{\mathbb{L}^2}+\|J_{\epsilon'}(\mathbf{u}^{\epsilon}-\mathbf{u}^{\epsilon'})\|_{\mathbb{L}^2}^2\\
&\lesssim\max\{\epsilon,\epsilon'\}\|\mathbf{u}^{\epsilon}\|_{\mathbb{H}^3}\|\mathbf{u}^{\epsilon}-\mathbf{u}^{\epsilon'}\|_{\mathbb{L}^2}+C_{\varepsilon}\|\mathbf{u}^{\epsilon}-\mathbf{u}^{\epsilon'}\|_{\mathbb{L}^2}^2+\varepsilon\|J_{\epsilon'}\Delta(\mathbf{u}^{\epsilon}-\mathbf{u}^{\epsilon'})\|_{\mathbb{L}^2}^2.
\end{split}
\end{equation*}
For $K_4$, noting $H^2(\mathbb{R}^3)$ is a Banach algebra and $H^2(\mathbb{R}^3)\hookrightarrow L^{\infty}(\mathbb{R}^3)$, we have
\begin{equation*}
\begin{split}
&K_4=-2((J_{\epsilon}-J_{\epsilon'})(|J_{\epsilon}\mathbf{u}^{\epsilon}|^2J_{\epsilon}\mathbf{u}^{\epsilon}),\mathbf{u}^{\epsilon}-\mathbf{u}^{\epsilon'})_{\mathbb{L}^2}\\
&-2(J_{\epsilon'}(|J_{\epsilon}\mathbf{u}^{\epsilon}|^2(J_{\epsilon}-J_{\epsilon'})\mathbf{u}^{\epsilon}),\mathbf{u}^{\epsilon}-\mathbf{u}^{\epsilon'})_{\mathbb{L}^2}\\
&-2(J_{\epsilon'}(|J_{\epsilon}\mathbf{u}^{\epsilon}|(|J_{\epsilon}\mathbf{u}^{\epsilon}|-|J_{\epsilon'}\mathbf{u}^{\epsilon}|))J_{\epsilon'}\mathbf{u}^{\epsilon'}),\mathbf{u}^{\epsilon}-\mathbf{u}^{\epsilon'})_{\mathbb{L}^2}\\
&-2(J_{\epsilon'}(|J_{\epsilon}\mathbf{u}^{\epsilon}|(|J_{\epsilon'}\mathbf{u}^{\epsilon}|-|J_{\epsilon'}\mathbf{u}^{\epsilon'}|))J_{\epsilon'}\mathbf{u}^{\epsilon'}),\mathbf{u}^{\epsilon}-\mathbf{u}^{\epsilon'})_{\mathbb{L}^2}\\
&-2(J_{\epsilon'}(|J_{\epsilon'}\mathbf{u}^{\epsilon'}|(|J_{\epsilon}\mathbf{u}^{\epsilon}|-|J_{\epsilon'}\mathbf{u}^{\epsilon}|))J_{\epsilon'}\mathbf{u}^{\epsilon'}),\mathbf{u}^{\epsilon}-\mathbf{u}^{\epsilon'})_{\mathbb{L}^2}\\
&-2(J_{\epsilon'}(|J_{\epsilon'}\mathbf{u}^{\epsilon'}|(|J_{\epsilon'}\mathbf{u}^{\epsilon}|-|J_{\epsilon'}\mathbf{u}^{\epsilon'}|))J_{\epsilon'}\mathbf{u}^{\epsilon'}),\mathbf{u}^{\epsilon}-\mathbf{u}^{\epsilon'})_{\mathbb{L}^2}\\
&\lesssim\max\{\epsilon,\epsilon'\}(\|\mathbf{u}^{\epsilon}\|_{\mathbb{H}^2}^3+\|\mathbf{u}^{\epsilon}\|_{\mathbb{H}^2}^2\|\mathbf{u}^{\epsilon'}\|_{\mathbb{H}^2}+\|\mathbf{u}^{\epsilon'}\|_{\mathbb{H}^2}^2\|\mathbf{u}^{\epsilon}\|_{\mathbb{H}^2})\|\mathbf{u}^{\epsilon}-\mathbf{u}^{\epsilon'}\|_{\mathbb{L}^2}\\
&+(\|\mathbf{u}^{\epsilon}\|_{\mathbb{H}^2}^2+\|\mathbf{u}^{\epsilon'}\|_{\mathbb{H}^2}^2)\|\mathbf{u}^{\epsilon}-\mathbf{u}^{\epsilon'}\|_{\mathbb{L}^2}^2.
\end{split}
\end{equation*}
For $K_5$, by integrating by parts, applying the basic equality $|f|^2-|g|^2=f\cdot(f-g)+(f-g)\cdot g$  and using Young's inequality, it follows that
\begin{equation*}
\begin{split}
&K_5=2((J_{\epsilon}-J_{\epsilon'})\Delta(|J_{\epsilon}\mathbf{u}^{\epsilon}|^2J_{\epsilon}\mathbf{u}^{\epsilon}),\mathbf{u}^{\epsilon}-\mathbf{u}^{\epsilon'})_{\mathbb{L}^2}\\
&+2(J_{\epsilon'}(|J_{\epsilon}\mathbf{u}^{\epsilon}|^2J_{\epsilon}\mathbf{u}^{\epsilon}-|J_{\epsilon'}\mathbf{u}^{\epsilon'}|^2J_{\epsilon'}\mathbf{u}^{\epsilon'}),\Delta(\mathbf{u}^{\epsilon}-\mathbf{u}^{\epsilon'}))_{\mathbb{L}^2}\\
&=2((J_{\epsilon}-J_{\epsilon'})\Delta(|J_{\epsilon}\mathbf{u}^{\epsilon}|^2J_{\epsilon}\mathbf{u}^{\epsilon}),\mathbf{u}^{\epsilon}-\mathbf{u}^{\epsilon'})_{\mathbb{L}^2}\\
&+2(J_{\epsilon'}\Delta(|J_{\epsilon}\mathbf{u}^{\epsilon}|^2(J_{\epsilon}-J_{\epsilon'})\mathbf{u}^{\epsilon}),\mathbf{u}^{\epsilon}-\mathbf{u}^{\epsilon'})_{\mathbb{L}^2}\\
&+2(J_{\epsilon'}\Delta((J_{\epsilon}\mathbf{u}^{\epsilon}\cdot(J_{\epsilon}\mathbf{u}^{\epsilon}-J_{\epsilon'}\mathbf{u}^{\epsilon}))J_{\epsilon'}\mathbf{u}^{\epsilon'}),\mathbf{u}^{\epsilon}-\mathbf{u}^{\epsilon'})_{\mathbb{L}^2}\\
&+2((J_{\epsilon}\mathbf{u}^{\epsilon}\cdot(J_{\epsilon'}\mathbf{u}^{\epsilon}-J_{\epsilon'}\mathbf{u}^{\epsilon'}))J_{\epsilon'}\mathbf{u}^{\epsilon'},J_{\epsilon'}\Delta(\mathbf{u}^{\epsilon}-\mathbf{u}^{\epsilon'}))_{\mathbb{L}^2}\\
&+2(J_{\epsilon'}\Delta((J_{\epsilon'}\mathbf{u}^{\epsilon'}\cdot(J_{\epsilon}\mathbf{u}^{\epsilon}-J_{\epsilon'}\mathbf{u}^{\epsilon}))J_{\epsilon'}\mathbf{u}^{\epsilon'}),\mathbf{u}^{\epsilon}-\mathbf{u}^{\epsilon'})_{\mathbb{L}^2}\\
&+2((J_{\epsilon'}\mathbf{u}^{\epsilon'}\cdot(J_{\epsilon'}\mathbf{u}^{\epsilon}-J_{\epsilon'}\mathbf{u}^{\epsilon'}))J_{\epsilon'}\mathbf{u}^{\epsilon'},J_{\epsilon'}\Delta(\mathbf{u}^{\epsilon}-\mathbf{u}^{\epsilon'}))_{\mathbb{L}^2}\\
&\lesssim\max\{\epsilon,\epsilon'\}\||J_{\epsilon}\mathbf{u}^{\epsilon}|^2J_{\epsilon}\mathbf{u}^{\epsilon}\|_{\mathbb{H}^3}\|\mathbf{u}^{\epsilon}-\mathbf{u}^{\epsilon'}\|_{\mathbb{L}^2}+\||J_{\epsilon}\mathbf{u}^{\epsilon}|^2(J_{\epsilon}-J_{\epsilon'})\mathbf{u}^{\epsilon}\|_{\mathbb{H}^2}\|\mathbf{u}^{\epsilon}-\mathbf{u}^{\epsilon'}\|_{\mathbb{L}^2}\\
&+\|(J_{\epsilon}\mathbf{u}^{\epsilon}\cdot(J_{\epsilon}\mathbf{u}^{\epsilon}-J_{\epsilon'}\mathbf{u}^{\epsilon}))J_{\epsilon'}\mathbf{u}^{\epsilon'}\|_{\mathbb{H}^2}\|\mathbf{u}^{\epsilon}-\mathbf{u}^{\epsilon'}\|_{\mathbb{L}^2}+\varepsilon\|J_{\epsilon'}\Delta(\mathbf{u}^{\epsilon}-\mathbf{u}^{\epsilon'})\|_{\mathbb{L}^2}^2\\
&+C_{\varepsilon}\|J_{\epsilon}\mathbf{u}^{\epsilon}\|_{\mathbb{L}^{\infty}}^2\|J_{\epsilon'}\mathbf{u}^{\epsilon'}\|_{\mathbb{L}^{\infty}}^2\|\mathbf{u}^{\epsilon}-\mathbf{u}^{\epsilon'}\|_{\mathbb{L}^2}^2+\|(J_{\epsilon'}\mathbf{u}^{\epsilon'}\cdot(J_{\epsilon}\mathbf{u}^{\epsilon}-J_{\epsilon'}\mathbf{u}^{\epsilon}))J_{\epsilon'}\mathbf{u}^{\epsilon'}\|_{\mathbb{H}^2}\|\mathbf{u}^{\epsilon}-\mathbf{u}^{\epsilon'}\|_{\mathbb{L}^2}\\
&+C_{\varepsilon}\|J_{\epsilon'}\mathbf{u}^{\epsilon'}\|_{\mathbb{L}^{\infty}}^4\|\mathbf{u}^{\epsilon}-\mathbf{u}^{\epsilon'}\|_{\mathbb{L}^2}^2\\
&\lesssim\max\{\epsilon,\epsilon'\}(\|\mathbf{u}^{\epsilon}\|_{\mathbb{H}^3}^3+\|\mathbf{u}^{\epsilon}\|_{\mathbb{H}^3}\|\mathbf{u}^{\epsilon}\|_{\mathbb{H}^2}\|\mathbf{u}^{\epsilon'}\|_{\mathbb{H}^2}+\|\mathbf{u}^{\epsilon}\|_{\mathbb{H}^3}\|\mathbf{u}^{\epsilon'}\|_{\mathbb{H}^2}^2)\|\mathbf{u}^{\epsilon}-\mathbf{u}^{\epsilon'}\|_{\mathbb{L}^2}\\
&+C_{\varepsilon}(\|\mathbf{u}^{\epsilon}\|_{\mathbb{H}^{2}}^4+\|\mathbf{u}^{\epsilon'}\|_{\mathbb{H}^{2}}^4)\|\mathbf{u}^{\epsilon}-\mathbf{u}^{\epsilon'}\|_{\mathbb{L}^2}^2+\varepsilon\|J_{\epsilon'}\Delta(\mathbf{u}^{\epsilon}-\mathbf{u}^{\epsilon'})\|_{\mathbb{L}^2}^2.
\end{split}
\end{equation*}
For $K_6$, it follows that
\begin{equation*}
\begin{split}
&K_6=-((J_{\epsilon}-J_{\epsilon'})(J_{\epsilon}\mathbf{u}^{\epsilon}\times \Delta J_{\epsilon}\mathbf{u}^{\epsilon})),\mathbf{u}^{\epsilon}-\mathbf{u}^{\epsilon'})_{\mathbb{L}^2}\\
&-(J_{\epsilon'}(J_{\epsilon}\mathbf{u}^{\epsilon}\times \Delta J_{\epsilon}\mathbf{u}^{\epsilon}-J_{\epsilon'}\mathbf{u}^{\epsilon'}\times \Delta J_{\epsilon'}\mathbf{u}^{\epsilon'}),\mathbf{u}^{\epsilon}-\mathbf{u}^{\epsilon'})_{\mathbb{L}^2}\\
&=-((J_{\epsilon}-J_{\epsilon'})(J_{\epsilon}\mathbf{u}^{\epsilon}\times \Delta J_{\epsilon}\mathbf{u}^{\epsilon})),\mathbf{u}^{\epsilon}-\mathbf{u}^{\epsilon'})_{\mathbb{L}^2}\\
&-(J_{\epsilon'}(J_{\epsilon}\mathbf{u}^{\epsilon}\times (J_{\epsilon}\Delta\mathbf{u}^{\epsilon}-J_{\epsilon'}\Delta\mathbf{u}^{\epsilon})),\mathbf{u}^{\epsilon}-\mathbf{u}^{\epsilon'})_{\mathbb{L}^2}\\
&-(J_{\epsilon'}(J_{\epsilon}\mathbf{u}^{\epsilon}\times (J_{\epsilon'}\Delta\mathbf{u}^{\epsilon}-J_{\epsilon'}\Delta\mathbf{u}^{\epsilon'})),\mathbf{u}^{\epsilon}-\mathbf{u}^{\epsilon'})_{\mathbb{L}^2}\\
&-(J_{\epsilon'}( (J_{\epsilon}\mathbf{u}^{\epsilon}-J_{\epsilon'}\mathbf{u}^{\epsilon})\times J_{\epsilon'}\Delta\mathbf{u}^{\epsilon'}),\mathbf{u}^{\epsilon}-\mathbf{u}^{\epsilon'})_{\mathbb{L}^2}\\
&-(J_{\epsilon'}((J_{\epsilon'}\mathbf{u}^{\epsilon}-J_{\epsilon'}\mathbf{u}^{\epsilon'})\times J_{\epsilon'}\Delta\mathbf{u}^{\epsilon'}),\mathbf{u}^{\epsilon}-\mathbf{u}^{\epsilon'})_{\mathbb{L}^2}\\
&\lesssim\max\{\epsilon,\epsilon'\}(\|\mathbf{u}^{\epsilon}\|_{\mathbb{H}^3}^2+\|\mathbf{u}^{\epsilon}\|_{\mathbb{H}^3}\|\mathbf{u}^{\epsilon'}\|_{\mathbb{H}^2})\|\mathbf{u}^{\epsilon}-\mathbf{u}^{\epsilon'}\|_{\mathbb{L}^2}\\
&+C_{\varepsilon}(\|\mathbf{u}^{\epsilon}\|_{\mathbb{H}^{2}}^2+\|\mathbf{u}^{\epsilon'}\|_{\mathbb{H}^{4}})\|\mathbf{u}^{\epsilon}-\mathbf{u}^{\epsilon'}\|_{\mathbb{L}^2}^2+\varepsilon\|J_{\epsilon'}\Delta(\mathbf{u}^{\epsilon}-\mathbf{u}^{\epsilon'})\|_{\mathbb{L}^2}^2.
\end{split}
\end{equation*}
Plugging the estimates for $K_1$-$K_6$ into \eqref{4-8} and choosing $\varepsilon$ small enough, we infer from Lemma \ref{lem3-2} that
\begin{equation}\label{4-9}
\begin{split}
&\frac{\mathrm{d}}{\mathrm{d}t}\|\mathbf{u}^{\epsilon}-\mathbf{u}^{\epsilon'}\|_{\mathbb{L}^2}\lesssim C_{\|\mathbf{u}_0\|_{\mathbb{H}^6},T'}\max\{\epsilon,\epsilon'\}+C_{\|\mathbf{u}_0\|_{\mathbb{H}^6},T'}\|\mathbf{u}^{\epsilon}-\mathbf{u}^{\epsilon'}\|_{\mathbb{L}^2}.
\end{split}
\end{equation}
Thus applying the Gronwall lemma to \eqref{4-9}, we obtain the result. The proof is thus completed.
\end{proof}
\begin{corollary}\label{cor2}
Let $\mathbf{u}_0 \in \mathbb{H}^k$.
\begin{itemize}
    \item If $k=6$, then the sequence $\mathbf{u}^{\epsilon}$ strongly converges to $\mathbf{u}$ in $\mathcal{C}([0,T'];\mathcal{C}^4(\mathbb{R}^3))$.
    \item If $k=6+4m$, $m \in \mathbb{N}^+$, then $\mathbf{u} \in \bigcap_{k=0}^{m+1} \mathcal{C}^k([0,T'];\mathcal{C}^{4(m+1-k)}(\mathbb{R}^3))$.
\end{itemize}
Moreover, there exists a maximal time of existence $T_{M}>0$ and a solution
$$
\mathbf{u} \in \bigcap_{k=0}^{m+1} \mathcal{C}^k([0,T_{M});\mathcal{C}^{4(m+1-k)}(\mathbb{R}^3))
$$
when $k=6+4m$, $m \in \mathbb{N}$. In particular, for any given $T>0$, if
$
\sup_{t \in [0,T]} \|\nabla \mathbf{u}(t)\|_{\mathbb{L}^2} < \infty,
$
then necessarily $T_M \geq T$ (the solution can be extended to $[0,T]$). Equivalently, if $T_M \leq T$, then
$
\lim_{t \nearrow T_M} \|\nabla \mathbf{u}(t)\|_{\mathbb{L}^2} = \infty.
$
\end{corollary}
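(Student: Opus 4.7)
The plan is to upgrade the $\mathbb{L}^2$-Cauchy estimate of Lemma \ref{lem4-1} to a Cauchy estimate in a Sobolev norm high enough to embed into $\mathcal{C}^{4(m+1)}(\mathbb{R}^3)$, and then to read off the time regularity by differentiating the equation. First I would fix $k=6+4m$ and invoke the interpolation inequality
$$\|f\|_{\mathbb{H}^s}\lesssim \|f\|_{\mathbb{L}^2}^{1-s/k}\|f\|_{\mathbb{H}^k}^{s/k},\qquad 0<s<k,$$
applied to $\mathbf{u}^{\epsilon}-\mathbf{u}^{\epsilon'}$. Combining this with the uniform bound in Lemma \ref{lem4} and the rate from Lemma \ref{lem4-1} yields
$$\sup_{t\in[0,T']}\|\mathbf{u}^{\epsilon}(t)-\mathbf{u}^{\epsilon'}(t)\|_{\mathbb{H}^s}\lesssim_{\|\mathbf{u}_0\|_{\mathbb{H}^k},T'}\max\{\epsilon,\epsilon'\}^{1-s/k}.$$
Choosing any $s$ in the non-empty window $(4m+\tfrac{11}{2},\,6+4m)$ and using the Sobolev embedding $\mathbb{H}^s\hookrightarrow \mathcal{C}^{4(m+1)}(\mathbb{R}^3)$ (valid as soon as $s>4(m+1)+\tfrac{3}{2}$), I obtain $\mathbf{u}^{\epsilon}\to \mathbf{u}$ strongly in $\mathcal{C}([0,T'];\mathcal{C}^{4(m+1)}(\mathbb{R}^3))$. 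This already disposes of the case $k=6$ (take $m=0$) and furnishes the $j=0$ slot of the intersection for general $m$.

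Next I would differentiate the equation $\partial_t \mathbf{u}=F(\mathbf{u})$ iteratively. Since $F$ is a polynomial fourth-order differential operator in $\mathbf{u}$, an induction on $j$ produces a universal polynomial expression
$$\partial_t^{\,j}\mathbf{u}=\mathcal{G}_j(\mathbf{u},\nabla \mathbf{u},\ldots,\nabla^{4j}\mathbf{u}),\qquad 1\leq j\leq m+1.$$
To secure $\partial_t^{\,j}\mathbf{u}\in \mathcal{C}([0,T'];\mathcal{C}^{4(m+1-j)}(\mathbb{R}^3))$ it therefore suffices to know $\mathbf{u}\in\mathcal{C}([0,T'];\mathcal{C}^{4(m+1)}(\mathbb{R}^3))$, which has just been established; continuity in $t$ of $\partial_t^{\,j}\mathbf{u}$ transfers from that of $\mathbf{u}$ because $\mathcal{G}_j$ acts continuously on the topology of $\mathcal{C}^{4(m+1)}(\mathbb{R}^3)$. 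Ranging $j$ from $0$ to $m+1$ yields the full inclusion $\mathbf{u}\in\bigcap_{j=0}^{m+1}\mathcal{C}^{j}([0,T'];\mathcal{C}^{4(m+1-j)}(\mathbb{R}^3))$.

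Finally, for the maximal time of existence and the blow-up criterion, I would mimic the argument used in Corollary \ref{cor1} almost verbatim. The local existence time furnished by Lemmas \ref{lem3}--\ref{lem4} is of size $\asymp (\|\nabla \mathbf{u}_0\|_{\mathbb{L}^2}^2+C_{\|\mathbf{u}_0\|_{\mathbb{L}^2},T})^{-4}$, and thus only depends on $\|\nabla \mathbf{u}_0\|_{\mathbb{L}^2}$ once the $\mathbb{L}^2$-norm is uniformly controlled by \eqref{lem3-3}. Restarting at the endpoint with datum $\mathbf{u}(T')\in\mathbb{H}^k$ (Lemma \ref{lem4} propagates the $\mathbb{H}^k$-regularity at every restart), I iterate until either $t=T$ or $\|\nabla \mathbf{u}(t)\|_{\mathbb{L}^2}\to\infty$, which is exactly the stated dichotomy. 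The hard part, in my view, is not the interpolation itself but making sure the time-continuity of $\partial_t^{\,j}\mathbf{u}$ genuinely follows from that of $\mathbf{u}$ in $\mathcal{C}^{4(m+1)}(\mathbb{R}^3)$ through $\mathcal{G}_j$; once the polynomial/continuity structure of $\mathcal{G}_j$ is carefully written out this step is routine, and the rest of the corollary is a direct transcription of the argument used for strong solutions.
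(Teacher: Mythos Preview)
Your proposal is correct and follows essentially the same route as the paper: interpolate the $\mathbb{L}^2$ Cauchy estimate of Lemma~\ref{lem4-1} against the uniform $\mathbb{H}^k$ bound of Lemma~\ref{lem4}, apply the Sobolev embedding $\mathbb{H}^s\hookrightarrow \mathcal{C}^{4(m+1)}(\mathbb{R}^3)$ for $s>4(m+1)+\tfrac{3}{2}$, and then read off the time regularity from the polynomial structure of the equation; the blow-up criterion is inherited verbatim from Corollary~\ref{cor1}. The only cosmetic difference is that the paper carries out the interpolation/embedding step explicitly for $k=6$ and then invokes a ``bootstrap argument'' for general $m$, whereas you handle all $m$ at once by choosing $s$ in the window $(4m+\tfrac{11}{2},\,6+4m)$ --- your treatment is in fact slightly more explicit than the paper's on this point.
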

\begin{proof}[\emph{\textbf{Proof}}]
As an immediate consequence of Lemma \ref{lem4-1}, we obtain the existence of a limit function $\mathbf{u}\in \mathcal{C}([0,T'];\mathbb{L}^2)$ satisfying the uniform convergence estimate
\begin{equation}\label{4-10}
\sup_{t\in[0,T']}\|\mathbf{u}^{\epsilon}-\mathbf{u}\|_{\mathbb{L}^2} \lesssim_{\|\mathbf{u}_0\|_{\mathbb{H}^k},T'} \epsilon.
\end{equation}
Furthermore, applying the Gagliardo-Nirenberg interpolation inequality yields the existence of a constant $C_k>0$ such that for any $f\in H^k(\mathbb{R}^3)$ and intermediate regularity index $0<k'<k$, we have
\begin{equation}\label{4-11}
\|f\|_{H^{k'}} \leq C_k\|f\|_{H^k}^{\frac{k'}{k}}\|f\|_{L^2}^{1-\frac{k'}{k}}.
\end{equation}
By selecting $\frac{11}{2}<k'<k$ and combining Lemma \ref{lem3-2} with estimates \eqref{4-10} and \eqref{4-11}, we derive the refined convergence rate
\begin{equation*}
\sup_{t\in[0,T']}\|\mathbf{u}^{\epsilon}-\mathbf{u}\|_{\mathbb{H}^{k'}} \lesssim_{\|\mathbf{u}_0\|_{\mathbb{H}^k},T'} \epsilon^{1-\frac{k'}{k}}.
\end{equation*}
The Sobolev embedding $H^{k'}(\mathbb{R}^3)\hookrightarrow \mathcal{C}^4(\mathbb{R}^3)$ for $k'>\frac{11}{2}$ implies the strong convergence
$$
\mathbf{u}^{\epsilon} \to \mathbf{u} \quad \text{in} \quad \mathcal{C}([0,T'];\mathcal{C}^4(\mathbb{R}^3))
$$
when $k=6$. Moreover, from the regularized equation \eqref{Mod-1}, we deduce that $\mathbf{u}_t^{\epsilon}\rightarrow\mathbf{u}_t$ strongly in $\mathcal{C}([0,T'];\mathcal{C}(\mathbb{R}^3))$, confirming that $\mathbf{u}$ constitutes a classical solution to the LLBar equation. We remark that $H^{s}(\mathbb{R}^3)$ forms a Banach algebra for $s>\frac{3}{2}$. Consequently, through a bootstrap argument leveraging the structural properties of the LLBar equation, we obtain the enhanced regularity
$
\mathbf{u}\in \bigcap_{k=0}^{m+1}\mathcal{C}^k([0,T'];\mathcal{C}^{4(m+1-k)}(\mathbb{R}^3))
$
for initial data $\mathbf{u}_0\in\mathbb{H}^k$ with $k=6+4m$, $m\in\mathbb{N}^+$. The blow-up criterion remains identical to that established in Corollary \ref{cor1}, completing the proof.
\end{proof}

\section{Proof of Theorem \ref{the1}}\label{sec5}

\subsection{Existence of global solutions}\label{sec5-1}
Based on Corollary \ref{cor1} in Section \ref{sec4}, we know that there exists a maximal time $T_M$ such that the equation
\begin{equation}\label{5-1}
\begin{split}
&\mathbf{u}_t=-\Delta^2\mathbf{u}-\Delta\mathbf{u}+2(1-|\mathbf{u}|^2)\mathbf{u}+2\Delta(|\mathbf{u}|^2\mathbf{u})-\mathbf{u}\times\Delta \mathbf{u}
\end{split}
\end{equation}
holds almost everywhere with respect to $(t,x)\in [0,T_M)\times\mathbb{R}^3$. We shall fully utilize the structural features of the LLBar equation itself to obtain the global solution. Specifically, taking the $\mathbb{L}^2$ inner product of both sides of equation \eqref{5-1} with $|\mathbf{u}|^2\mathbf{u}$, we have
\begin{equation}\label{5-2}
\begin{split}
&\frac{1}{2}\frac{\mathrm{d}}{\mathrm{d}t}\|\mathbf{u}\|_{\mathbb{L}^4}^4=(\mathbf{H}_{\textrm{eff}}-\Delta\mathbf{H}_{\textrm{eff}}-\mathbf{u}\times \mathbf{H}_{\textrm{eff}},2|\mathbf{u}|^2\mathbf{u})_{\mathbb{L}^2},
\end{split}
\end{equation}
where $\mathbf{H}_{\textrm{eff}}=\Delta \mathbf{u}+2(1-|\mathbf{u}|^2)\mathbf{u}$ comes from \eqref{sys0}. Moreover, taking the $\mathbb{L}^2$ inner product of both sides of equation \eqref{5-1} with $-\Delta \mathbf{u}$ and $\mathbf{u}$, respectively, we obtain
\begin{equation}\label{5-3}
\begin{split}
&\frac{1}{2}\frac{\mathrm{d}}{\mathrm{d}t}\|\nabla\mathbf{u}\|_{\mathbb{L}^2}^2=(\mathbf{H}_{\textrm{eff}}-\Delta\mathbf{H}_{\textrm{eff}}-\mathbf{u}\times \mathbf{H}_{\textrm{eff}},-\Delta\mathbf{u})_{\mathbb{L}^2},
\end{split}
\end{equation}
and
\begin{equation}\label{5-3}
\begin{split}
&\frac{\mathrm{d}}{\mathrm{d}t}\|\mathbf{u}\|_{\mathbb{L}^2}^2=(\mathbf{H}_{\textrm{eff}}-\Delta\mathbf{H}_{\textrm{eff}}-\mathbf{u}\times \mathbf{H}_{\textrm{eff}},2\mathbf{u})_{\mathbb{L}^2}.
\end{split}
\end{equation}
By combining \eqref{5-1}, \eqref{5-2}, and \eqref{5-3}, and noting the definition of $\mathbf{H}_{\textrm{eff}}$, it follows that
\begin{equation}\label{5-4}
\begin{split}
&\frac{\mathrm{d}}{\mathrm{d}t}(\frac{1}{2}\|\mathbf{u}\|_{\mathbb{L}^4}^4+\frac{1}{2}\|\nabla\mathbf{u}\|_{\mathbb{L}^2}^2-\|\mathbf{u}\|_{\mathbb{L}^2}^2)=(\mathbf{H}_{\textrm{eff}}-\Delta\mathbf{H}_{\textrm{eff}}-\mathbf{u}\times \mathbf{H}_{\textrm{eff}},-\mathbf{H}_{\textrm{eff}})_{\mathbb{L}^2}\\
&=-\|\mathbf{H}_{\textrm{eff}}\|_{H^1}^2\leq0.
\end{split}
\end{equation}
Based on \eqref{5-3} and a discussion similar to \eqref{lem1-2}, we readily obtain
\begin{equation*}
\begin{split}
&\frac{\mathrm{d}}{\mathrm{d}t}\|\mathbf{u}\|_{\mathbb{L}^2}^2+\|\Delta \mathbf{u}\|_{\mathbb{L}^2}^2\leq C\|\mathbf{u}\|_{\mathbb{L}^2}^2.
\end{split}
\end{equation*}
This combined with \eqref{5-4} implies that for any $t\in[0,T]$,
\begin{equation*}
\begin{split}
&\|\mathbf{u}(t)\|_{\mathbb{H}^1}^2+\int_0^t\|\mathbf{u}\|_{\mathbb{H}^3}^2\,\mathrm{d}s\leq C_{\|\mathbf{u}_0\|_{\mathbb{H}^1},T}.
\end{split}
\end{equation*}
Moreover, similar to the uniform bounded estimates in Lemma \ref{lem3-1} and Lemma \ref{lem3-2}, it is not difficult to prove that for any $t\in[0,T]$ and $\mathbf{u}_0\in \mathbb{H}^k$, $k\geq2$,
\begin{equation*}
\begin{split}
&\|\mathbf{u}(t)\|_{\mathbb{H}^k}^2+\int_0^t\|\mathbf{u}\|_{\mathbb{H}^{k+2}}^2\,\mathrm{d}s+\int_0^t\|\frac{\mathrm{d}}{\mathrm{d}t}\mathbf{u}\|_{\mathbb{H}^{k-2}}^2\,\mathrm{d}s\leq C_{\|\mathbf{u}_0\|_{\mathbb{H}^k},T}.
\end{split}
\end{equation*}
Thus, by using the blow-up criterion in Corollary \ref{cor1} and Corollary \ref{cor2}, we deduce that for any $T>0$, $\mathbf{u}\in \mathcal{C}([0,T];\mathbb{H}^2)\cap L^2(0,T;\mathbb{H}^4)$ when $\mathbf{u}_0\in \mathbb{H}^2$, and $\mathbf{u}\in \bigcap_{k=0}^{m+1}\mathcal{C}^k([0,T];\mathcal{C}^{4(m+1-k)}(\mathbb{R}^3))$ when $\mathbf{u}_0\in \mathbb{H}^{6+4m}$, where $m\in\mathbb{N}$. The global existence of strong, classical, and smooth solutions is thus established.

\subsection{Uniqueness of global solutions}\label{sec5-2}
The result on the uniqueness of the solution is as follows.
\begin{proposition}\label{pro2} Let $\mathbf{u}_1$ and $\mathbf{u}_2$ be two solutions to problem \eqref{sys1} with the same initial data $\mathbf{u}(0)\in \mathbb{H}^k$, $k\geq2$. Then, $\mathbf{u}_1\equiv\mathbf{u}_2$ in $\mathcal{C}([0,T];\mathbb{H}^k)$.
\end{proposition}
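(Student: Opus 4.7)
The plan is to perform an $\mathbb{L}^2$ energy estimate on the difference $\mathbf{w}:=\mathbf{u}_1-\mathbf{u}_2$, which satisfies $\mathbf{w}(0)=0$ and, after splitting the cross-product difference as $\mathbf{u}_1\times\Delta\mathbf{u}_1-\mathbf{u}_2\times\Delta\mathbf{u}_2=\mathbf{w}\times\Delta\mathbf{u}_1+\mathbf{u}_2\times\Delta\mathbf{w}$, the equation
\begin{equation*}
\mathbf{w}_t=-\Delta^2\mathbf{w}-\Delta\mathbf{w}+2\mathbf{w}-2\bigl(|\mathbf{u}_1|^2\mathbf{u}_1-|\mathbf{u}_2|^2\mathbf{u}_2\bigr)+2\Delta\bigl(|\mathbf{u}_1|^2\mathbf{u}_1-|\mathbf{u}_2|^2\mathbf{u}_2\bigr)-\mathbf{w}\times\Delta\mathbf{u}_1-\mathbf{u}_2\times\Delta\mathbf{w}.
\end{equation*}
Pairing in $\mathbb{L}^2$ with $\mathbf{w}$, integrating by parts in the linear fourth- and second-order terms, moving the Laplacian off the cubic, and invoking the pointwise orthogonality $(\mathbf{w}\times\Delta\mathbf{u}_1)\cdot\mathbf{w}=0$, I obtain
\begin{equation*}
\tfrac12\tfrac{\mathrm{d}}{\mathrm{d}t}\|\mathbf{w}\|_{\mathbb{L}^2}^2+\|\Delta\mathbf{w}\|_{\mathbb{L}^2}^2=\|\nabla\mathbf{w}\|_{\mathbb{L}^2}^2+2\|\mathbf{w}\|_{\mathbb{L}^2}^2-2(f,\mathbf{w})_{\mathbb{L}^2}+2(f,\Delta\mathbf{w})_{\mathbb{L}^2}-(\mathbf{u}_2\times\Delta\mathbf{w},\mathbf{w})_{\mathbb{L}^2},
\end{equation*}
with $f:=|\mathbf{u}_1|^2\mathbf{u}_1-|\mathbf{u}_2|^2\mathbf{u}_2$.

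The cubic contributions are controlled via the pointwise factorization $f=|\mathbf{u}_1|^2\mathbf{w}+\bigl((\mathbf{u}_1+\mathbf{u}_2)\cdot\mathbf{w}\bigr)\mathbf{u}_2$, which yields $\|f\|_{\mathbb{L}^2}\lesssim\bigl(\|\mathbf{u}_1\|_{\mathbb{L}^\infty}^2+\|\mathbf{u}_2\|_{\mathbb{L}^\infty}^2\bigr)\|\mathbf{w}\|_{\mathbb{L}^2}$. Together with H\"older and Young's inequalities, the Sobolev embedding $\mathbb{H}^2\hookrightarrow\mathbb{L}^\infty$ valid in $\mathbb{R}^3$, and the interpolation $\|\nabla\mathbf{w}\|_{\mathbb{L}^2}^2\leq\varepsilon\|\Delta\mathbf{w}\|_{\mathbb{L}^2}^2+C_\varepsilon\|\mathbf{w}\|_{\mathbb{L}^2}^2$, the gradient term and the two cubic pairings collectively contribute at most
\begin{equation*}
3\varepsilon\|\Delta\mathbf{w}\|_{\mathbb{L}^2}^2+C_\varepsilon\bigl(1+\|\mathbf{u}_1\|_{\mathbb{H}^2}^4+\|\mathbf{u}_2\|_{\mathbb{H}^2}^4\bigr)\|\mathbf{w}\|_{\mathbb{L}^2}^2.
\end{equation*}

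The main obstacle is the residual gyromagnetic term $-(\mathbf{u}_2\times\Delta\mathbf{w},\mathbf{w})_{\mathbb{L}^2}$, since it sits at the same differential order as the biharmonic dissipation and cannot be controlled by a brute-force Cauchy--Schwarz alone. My resolution is the scalar triple product identity
\begin{equation*}
(\mathbf{u}_2\times\Delta\mathbf{w})\cdot\mathbf{w}=\Delta\mathbf{w}\cdot(\mathbf{w}\times\mathbf{u}_2),
\end{equation*}
which transfers one copy of $\mathbf{w}$ out of the high-order factor and leaves $\bigl|(\mathbf{u}_2\times\Delta\mathbf{w},\mathbf{w})_{\mathbb{L}^2}\bigr|\leq\|\mathbf{u}_2\|_{\mathbb{L}^\infty}\|\mathbf{w}\|_{\mathbb{L}^2}\|\Delta\mathbf{w}\|_{\mathbb{L}^2}\leq\varepsilon\|\Delta\mathbf{w}\|_{\mathbb{L}^2}^2+C_\varepsilon\|\mathbf{u}_2\|_{\mathbb{H}^2}^2\|\mathbf{w}\|_{\mathbb{L}^2}^2$. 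Choosing $\varepsilon$ small enough to absorb all $\|\Delta\mathbf{w}\|_{\mathbb{L}^2}^2$ contributions into the dissipation on the left, I arrive at
\begin{equation*}
\tfrac{\mathrm{d}}{\mathrm{d}t}\|\mathbf{w}\|_{\mathbb{L}^2}^2\leq C\bigl(1+\|\mathbf{u}_1\|_{\mathbb{H}^2}^4+\|\mathbf{u}_2\|_{\mathbb{H}^2}^4\bigr)\|\mathbf{w}\|_{\mathbb{L}^2}^2,
\end{equation*}
whose coefficient is bounded on $[0,T]$ since $\mathbf{u}_i\in\mathcal{C}([0,T];\mathbb{H}^2)$. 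Gronwall's lemma combined with $\mathbf{w}(0)=0$ then forces $\mathbf{w}\equiv0$ on $[0,T]$, and because both solutions already lie in $\mathcal{C}([0,T];\mathbb{H}^k)$, this gives $\mathbf{u}_1=\mathbf{u}_2$ in that space for every admissible $k\geq2$.
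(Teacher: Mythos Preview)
Your argument is correct and takes a genuinely simpler route than the paper's. The paper performs the energy estimate at the $\mathbb{H}^k$ level by testing the difference equation with $\Lambda^{2k}\mathbf{u}^*$; this forces it to use Moser-type product estimates, a commutator-style expansion of $\Lambda^k(\mathbf{u}_2\times\nabla\mathbf{u}^*)$ to isolate the cancellation in the highest-order piece, and the additional regularity $\mathbf{u}_i\in L^2(0,T;\mathbb{H}^{k+2})$ to close the Gronwall coefficient. You instead work purely in $\mathbb{L}^2$: only pointwise factorizations, the embedding $\mathbb{H}^2\hookrightarrow\mathbb{L}^\infty$, and the basic $\mathcal{C}([0,T];\mathbb{H}^2)$ bound are needed, and once $\mathbf{w}\equiv0$ in $\mathbb{L}^2$ the equality in $\mathcal{C}([0,T];\mathbb{H}^k)$ is immediate since both solutions already live there. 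Your approach is more elementary and uses strictly less information about the solutions. One cosmetic remark: the term $-(\mathbf{u}_2\times\Delta\mathbf{w},\mathbf{w})_{\mathbb{L}^2}$ is not really an obstacle---the direct H\"older bound $|(\mathbf{u}_2\times\Delta\mathbf{w},\mathbf{w})|\leq\|\mathbf{u}_2\|_{\mathbb{L}^\infty}\|\Delta\mathbf{w}\|_{\mathbb{L}^2}\|\mathbf{w}\|_{\mathbb{L}^2}$ already gives exactly your estimate without invoking the triple-product identity.
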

\begin{proof}[\emph{\textbf{Proof}}] Based on the existence proofs of solutions in Sections \ref{sec3} and \ref{sec4}, we know that $\mathbf{u}_1$ and $\mathbf{u}_2$ satisfy the following space-time regularity:
\begin{equation}\label{un1}
\begin{split}
&\mathbf{u}_i\in \mathcal{C}([0,T];\mathbb{H}^k)\cap L^2(0,T;\mathbb{H}^{k+2})\cap W^{1,2}(0,T;\mathbb{H}^{k-2}),~i=1,~2.
\end{split}
\end{equation}
Let $\mathbf{u}^*:=\mathbf{u}_1-\mathbf{u}_2$. Then $\mathbf{u}^*$ satisfies the following equation
\begin{equation*}
\begin{split}
&\frac{\mathrm{d}}{\mathrm{d}t}\mathbf{u}^*+\Delta^2\mathbf{u}^*\\
&=-\Delta\mathbf{u}^*+2\mathbf{u}^*-2(|\mathbf{u}_1|^2\mathbf{u}_1-|\mathbf{u}_2|^2\mathbf{u}_2)+2\Delta(|\mathbf{u}_1|^2\mathbf{u}_1-|\mathbf{u}_2|^2\mathbf{u}_2)-\mathbf{u}^*\times\Delta \mathbf{u}_1-\mathbf{u}_2\times\Delta \mathbf{u}^*,
\end{split}
\end{equation*}
with initial data $\mathbf{u}^*(0)=0$. Taking the $\mathbb{L}^2$ inner product of both sides of the above equation with $\Lambda^{2k}\mathbf{u}^*$, it follows that
\begin{equation}\label{un2}
\begin{split}
&\frac{1}{2}\frac{\mathrm{d}}{\mathrm{d}t}\|\Lambda^{k}\mathbf{u}^*\|_{\mathbb{L}^2}^2+\|\Delta\Lambda^{k}\mathbf{u}^*\|_{\mathbb{L}^2}^2\\
&=\|\nabla\Lambda^{k}\mathbf{u}^*\|_{\mathbb{L}^2}^2+2\|\Lambda^{k}\mathbf{u}^*\|_{\mathbb{L}^2}^2-2(\Lambda^{k}(|\mathbf{u}_1|^2\mathbf{u}_1-|\mathbf{u}_2|^2\mathbf{u}_2),\Lambda^{k}\mathbf{u}^*)_{\mathbb{L}^2}\\
&+2(\Lambda^{k}(|\mathbf{u}_1|^2\mathbf{u}_1-|\mathbf{u}_2|^2\mathbf{u}_2),\Delta\Lambda^{k}\mathbf{u}^*)_{\mathbb{L}^2}-(\Lambda^{k}(\mathbf{u}^*\times\Delta \mathbf{u}_1),\Lambda^{k}\mathbf{u}^*)_{\mathbb{L}^2}\\
&+(\Lambda^{k}(\mathbf{u}_2\times\nabla \mathbf{u}^*),\nabla\Lambda^{k}\mathbf{u}^*)_{\mathbb{L}^2}:=L_1+\cdots +L_6.
\end{split}
\end{equation}
By using integration by parts and Young's inequality, it follows that
\begin{equation*}
\begin{split}
&L_1+L_2\leq \varepsilon\|\Delta\Lambda^{k}\mathbf{u}^*\|_{\mathbb{L}^2}^2+C_{\varepsilon}\|\Lambda^{k}\mathbf{u}^*\|_{\mathbb{L}^2}^2.
\end{split}
\end{equation*}
Noting that $H^{k}(\mathbb{R}^3)$ is a Banach algebra and $\|\Lambda^k\cdot\|_{L^2}\asymp\|\cdot\|_{H^k}$, we have
\begin{equation*}
\begin{split}
&L_3\lesssim\||\mathbf{u}_1|^2\mathbf{u}_1-|\mathbf{u}_2|^2\mathbf{u}_2\|_{\mathbb{H}^k}\|\mathbf{u}^*\|_{\mathbb{H}^k}\lesssim(\|\mathbf{u}_1\|_{\mathbb{H}^k}^2+\|\mathbf{u}_2\|_{\mathbb{H}^k}^2)\|\mathbf{u}^*\|_{\mathbb{H}^k}^2.
\end{split}
\end{equation*}
Similarly, by Young's inequality, it follows that
\begin{equation*}
\begin{split}
&L_4\leq \varepsilon\|\Delta\Lambda^{k}\mathbf{u}^*\|_{\mathbb{L}^2}^2+C_{\varepsilon}\||\mathbf{u}_1|^2\mathbf{u}_1-|\mathbf{u}_2|^2\mathbf{u}_2\|_{\mathbb{H}^k}^2\\
&\leq\varepsilon\|\Delta\Lambda^{k}\mathbf{u}^*\|_{\mathbb{L}^2}^2+C_{\varepsilon}(\|\mathbf{u}_1\|_{\mathbb{H}^k}^4+\|\mathbf{u}_2\|_{\mathbb{H}^k}^4)\|\mathbf{u}^*\|_{\mathbb{H}^k}^2.
\end{split}
\end{equation*}
For $L_5$, it follows that
\begin{equation*}
\begin{split}
&L_5\lesssim\|\mathbf{u}_1\|_{\mathbb{H}^{k+2}}\|\mathbf{u}^*\|_{\mathbb{H}^k}^2.
\end{split}
\end{equation*}
For $L_6$, by using the fact that
\begin{equation*}
\begin{split}
&\Lambda^{k}(\mathbf{u}_2\times\nabla \mathbf{u}^*)\asymp\Lambda^{k}\mathbf{u}_2\times\nabla \mathbf{u}^*+\mathbf{u}_2\times (\nabla \Lambda^{k}\mathbf{u}^*)+C_k\sum_{i=1}^{k-1}\Lambda^{i}\mathbf{u}_2\times(\nabla\Lambda^{k-i}\mathbf{u}^*),
\end{split}
\end{equation*}
and the Sobolev embedding $H^2(\mathbb{R}^3)\hookrightarrow L^{\infty}(\mathbb{R}^3)$, it follows that
\begin{equation*}
\begin{split}
&L_6\lesssim|(\Lambda^{k}\mathbf{u}_2\times \nabla \mathbf{u}^*,\nabla\Lambda^{k} \mathbf{u}^*)_{\mathbb{L}^2}|+C_k\sum_{i=1}^{k-1}|(\Lambda^{i}\mathbf{u}_2\times(\nabla\Lambda^{k-i}\mathbf{u}^*),\nabla\Lambda^{k} \mathbf{u}^*)_{\mathbb{L}^2}|\\
&\lesssim\|\Lambda^{k}\mathbf{u}_2\|_{\mathbb{L}^{\infty}}\|\nabla \mathbf{u}^*\|_{\mathbb{L}^{2}}\|\nabla\Lambda^{k} \mathbf{u}^*\|_{\mathbb{L}^{2}}+C_k\sum_{i=1}^{k-1}\|\Lambda^{i}\mathbf{u}_2\|_{\mathbb{L}^{\infty}}\|\nabla\Lambda^{k-i}\mathbf{u}^*\|_{\mathbb{L}^2}\|\nabla\Lambda^{k} \mathbf{u}^*\|_{\mathbb{L}^2}\\
&\lesssim\varepsilon\|\nabla\Lambda^{k} \mathbf{u}^*\|_{\mathbb{L}^{2}}^2+C_{\varepsilon,k}\|\mathbf{u}_2\|_{\mathbb{H}^{k+2}}^2\|\mathbf{u}^*\|_{\mathbb{H}^{k}}^2\\
&\leq\varepsilon\|\Delta\Lambda^{k}\mathbf{u}^*\|_{\mathbb{L}^2}^2+C_{\varepsilon,k}(1+\|\mathbf{u}_2\|_{\mathbb{H}^{k+2}}^2)\|\mathbf{u}^*\|_{\mathbb{H}^{k}}^2.
\end{split}
\end{equation*}
Plugging the estimates for $L_1$-$L_6$ into \eqref{un2} and choosing $\varepsilon$ small enough, it follows that
\begin{equation}\label{un3}
\begin{split}
&\frac{\mathrm{d}}{\mathrm{d}t}\|\mathbf{u}^*\|_{\mathbb{H}^k}^2+\|\Delta\mathbf{u}^*\|_{\mathbb{H}^k}^2\lesssim_k(1+\|\mathbf{u}_1\|_{\mathbb{H}^k}^4+\|\mathbf{u}_2\|_{\mathbb{H}^k}^4+\|\mathbf{u}_1\|_{\mathbb{H}^{k+2}}^2+\|\mathbf{u}_2\|_{\mathbb{H}^{k+2}}^2)\|\mathbf{u}^*\|_{\mathbb{H}^{k}}^2.
\end{split}
\end{equation}
Noting \eqref{un1} and applying the Gronwall lemma to \eqref{un3}, we derive that $\mathbf{u}^*=0$ in $\mathcal{C}([0,T];\mathbb{H}^k)$. The proof is thus completed.
\end{proof}
The proof of Theorem \ref{the1} is thus complete by combining the global existence and Proposition \ref{pro2}.
\section*{Acknowledgements}
This work was partially supported by the National Natural Science Foundation of China (Grant No. 12231008).

\bibliographystyle{plain}%
\bibliography{3DLLBar}

@book{brown1963micromagnetics,
  title={Micromagnetics},
  author={Brown, W. F.},
  lccn={lc63018566},
  series={Interscience tracts on physics and astronomy},
  year={1963},
  publisher={Interscience Publishers}
}

@article{garanin1997fokker,
  title={Fokker-Planck and Landau-Lifshitz-Bloch equations for classical ferromagnets},
  author={Garanin, D. A.},
  journal={Physical Review B},
  volume={55},
  number={5},
  pages={3050},
  year={1997},
  publisher={APS}
}

@article{gilbert1955lagrangian,
  title={A Lagrangian formulation of the gyromagnetic equation of the magnetization field},
  author={Gilbert, T. L.},
  journal={Phys. Rev.},
  volume={100},
  pages={1243},
  year={1955}
}

@article{landau1935theory,
  title={On the theory of the dispersion of magnetic permeability in ferromagnetic bodies},
  author={Landau, L. and Lifshitz, E.},
  journal={Phys. Z. Sowjetunion},
  volume={8},
  number={153},
  pages={101--114},
  year={1935}
}

@article{le2016weak,
  title={Weak solutions of the Landau--Lifshitz--Bloch equation},
  author={Le, K. N.},
  journal={Journal of Differential Equations},
  volume={261},
  number={12},
  pages={6699--6717},
  year={2016},
  publisher={Elsevier}
}

@article{baryakhtar1984phenomenological,
  title={Phenomenological description of relaxation processes in magnets},
  author={Baryakhtar, V. G.},
  journal={Zhurnal Eksperimental'noi i Teoreticheskoi Fiziki},
  volume={87},
  number={4},
  pages={1501--1508},
  year={1984},
  publisher={MEZHDUNARODNAYA KNIGA 39 DIMITROVA UL., MOSCOW, 113095, RUSSIA}
}

@article{baryakhtar2013phenomenological,
  title={The phenomenological theory of magnetization relaxation},
  author={Baryakhtar, V. G. and Danilevich, A. G.},
  journal={Low Temperature Physics},
  volume={39},
  number={12},
  pages={993--1007},
  year={2013},
  publisher={AIP Publishing}
}

@article{baryakhtar1997soliton,
  title={Soliton relaxation in magnets},
  author={Baryakhtar, V. G. and Ivanov, B. A. and Sukstanskii, A. L. and Melikhov, E. Y.},
  journal={Physical Review B},
  volume={56},
  number={2},
  pages={619},
  year={1997},
  publisher={APS}
}

@article{feischl2017existence,
  title={Existence of Regular Solutions of the Landau--Lifshitz--Gilbert Equation in 3D with Natural Boundary Conditions},
  author={Feischl, M. and Tran, T.},
  journal={SIAM Journal on Mathematical Analysis},
  volume={49},
  number={6},
  pages={4470--4490},
  year={2017},
  publisher={SIAM}
}

@book{guo2008landau,
  title={Landau-Lifshitz Equations},
  author={Guo, B. and Ding, S.},
  volume={1},
  year={2008},
  publisher={World Scientific}
}

@book{mayergoyz2009nonlinear,
  title={Nonlinear magnetization dynamics in nanosystems},
  author={Mayergoyz, I. D. and Bertotti, G. and Serpico, C.},
  year={2009},
  publisher={Elsevier}
}

@article{alouges1992global,
  title={On global weak solutions for Landau-Lifshitz equations: existence and nonuniqueness},
  author={Alouges, F. and Soyeur, A.},
  journal={Nonlinear Anal},
  volume={18},
  number={11},
  pages={1071--1084},
  year={1992}
}

@article{atxitia2016fundamentals,
  title={Fundamentals and applications of the Landau--Lifshitz--Bloch equation},
  author={Atxitia, U. and Hinzke, D. and Nowak, U.},
  journal={Journal of Physics D: Applied Physics},
  volume={50},
  number={3},
  pages={033003},
  year={2016},
  publisher={IOP Publishing}
}

@article{vogler2014landau,
  title={Landau-Lifshitz-Bloch equation for exchange-coupled grains},
  author={Vogler, C. and Abert, C. and Bruckner, F. and Suess, D.},
  journal={Physical Review B},
  volume={90},
  number={21},
  pages={214431},
  year={2014},
  publisher={APS}
}

@article{dvornik2013micromagnetic,
  title={Micromagnetic modeling of anisotropic damping in magnetic nanoelements},
  author={Dvornik, M. and Vansteenkiste, A. and Van Waeyenberge, B.},
  journal={Physical Review B-Condensed Matter and Materials Physics},
  volume={88},
  number={5},
  pages={054427},
  year={2013},
  publisher={APS}
}

@article{dvornik2014thermodynamically,
  title={Thermodynamically self-consistent non-stochastic micromagnetic model for the ferromagnetic state},
  author={Dvornik, M. and Vansteenkiste, A. and Van Waeyenberge, B.},
  journal={Applied Physics Letters},
  volume={105},
  number={16},
  year={2014},
  publisher={AIP Publishing}
}

@article{soenjaya2023global,
  title={Global solutions of the Landau--Lifshitz--Baryakhtar equation},
  author={Soenjaya, A. L. and Tran, T.},
  journal={Journal of Differential Equations},
  volume={371},
  pages={191--230},
  year={2023},
  publisher={Elsevier}
}

@article{carbou2001regular,
  title={Regular solutions for Landau-Lifschitz equation in $\mathbb{R}^3$},
  author={Carbou, G. and Fabrie, P.},
  journal={Communications in Applied Analysis},
  volume={5},
  number={1},
  pages={17--30},
  year={2001}
}

@article{carbou2001regular1,
  title={Regular solutions for Landau-Lifschitz equation in a bounded domain.},
  author={Carbou, G. and Fabrie, P.},
  journal={Differential and integral equations},
  volume={14},
  number={2},
  pages={213--229},
  year={2001}
}

@article{weiss1907hypothese,
  title={L'hypoth\`{e}se du champ mol\'{e}culaire et la propri\'{e}t\'{e} ferromagn\'{e}tique},
  author={Weiss, P.},
  journal={Journal de Physique Th\'{e}orique et Appliqu\'{e}e},
  volume={6},
  number={1},
  pages={661--690},
  year={1907}
}

@book{taylor1996partial,
  title={Partial differential equations. 1, Basic theory},
  author={Taylor, M.E.},
  year={1996},
  publisher={Springer}
}

@book{majda2002vorticity,
  title={Vorticity and incompressible flow},
  author={Majda, A.J. and Bertozzi, A.L. and Ogawa, A.},
  publisher={Cambridge texts in applied mathematics},
  volume={55},
  number={4},
  pages={B77--B78},
  year={2002}
}

@book{bahouri2011fourier,
  title={Fourier analysis and nonlinear partial differential equations},
  author={Bahouri, H. and Chemin, J.-Y. and Danchin, R.},
  volume={343},
  year={2011},
  publisher={Springer}
}

@article{gagliardo1959ulteriori,
  title={Ulteriori propriet{\`a} di alcune classi di funzioni in pi{\`u} variabili},
  author={Gagliardo, E.},
  journal={Ricerche Mat.},
  volume={8},
  pages={24--51},
  year={1959}
}

@book{gilbarg1977elliptic,
  title={Elliptic partial differential equations of second order},
  author={Gilbarg, D. and Trudinger, N.S.},
  volume={224},
  number={2},
  year={1977},
  publisher={Springer}
}

@book{7brezis2011functional,
  title={Functional analysis, Sobolev spaces and partial differential equations},
  author={Br\'{e}zis, H.},
  volume={2},
  number={3},
  year={2011},
  publisher={Springer}
}

@article{bihari1956,
  title={A generalization of a lemma of Bellman and its application to uniqueness problems of differential equations},
  author={Bihari, Imre},
  journal={Acta Mathematica Hungarica},
  volume={7},
  number={1},
  pages={81--94},
  year={1956},
  publisher={Akad{\'e}miai Kiad{\'o}, co-published with Springer Science+ Business Media BV~¡­}
}

@article{lions1963problemes,
  title={Probl{\`e}mes aux limites non homog{\`e}nes (VII)},
  author={Lions, J.L. and Magenes, E.},
  journal={Annali di Matematica Pura ed Applicata},
  volume={63},
  number={1},
  pages={201--224},
  year={1963},
  publisher={Springer}
}

@article{li2021smooth,
  title={Smooth solutions of the Landau-Lifshitz-Bloch equation},
  author={Li, Qiaoxin and Guo, Boling and Zeng, Ming},
  journal={Journal of Applied Analysis \& Computation},
  volume={11},
  number={6},
  pages={2713--2721},
  year={2021},
  publisher={Journal of Applied Analysis \& Computation}
}

@article{he2023landau,
  title={The Landau--Lifshitz--Bloch Equation in the Thin Film},
  author={He, Yuxun and Wang, Huaqiao},
  journal={Journal of Mathematical Fluid Mechanics},
  volume={25},
  number={3},
  pages={54},
  year={2023},
  publisher={Springer}
}

@article{peng2022strong,
  title={Strong solutions of the Landau-Lifshitz-Bloch equation in Besov space},
  author={Peng, Yi and Wang, Huaqiao},
  journal={arXiv preprint arXiv:2211.03056},
  year={2022}
}

@article{pu2022global,
  title={Global smooth solutions for the Landau--Lifshitz--Bloch equation with helicity term},
  author={Pu, Xueke and Yang, Lian},
  journal={Applied Mathematics Letters},
  volume={133},
  pages={108215},
  year={2022},
  publisher={Elsevier}
}

@article{li2021weak,
  title={Weak and strong solutions to Landau-Lifshitz-Bloch-Maxwell equations with polarization},
  author={Li, Qiaoxin and Guo, Boling and Liu, Fengxia and Liu, Wuming},
  journal={Journal of Differential Equations},
  volume={286},
  pages={47--83},
  year={2021},
  publisher={Elsevier}
}

@article{lin2015global,
  title={Global well-posedness of the Landau--Lifshitz--Gilbert equation for initial data in Morrey spaces},
  author={Lin, Junyu and Lai, Baishun and Wang, Changyou},
  journal={Calculus of Variations and Partial Differential Equations},
  volume={54},
  pages={665--692},
  year={2015},
  publisher={Springer}
}

@article{gutierrez2019cauchy,
  title={The Cauchy problem for the Landau--Lifshitz--Gilbert equation in BMO and self-similar solutions},
  author={Guti{\'e}rrez, Susana and De Laire, Andr{\'e}},
  journal={Nonlinearity},
  volume={32},
  number={7},
  pages={2522},
  year={2019},
  publisher={IOP Publishing}
}

\end{document}